\title[Samuel Coskey: Profinite actions of $\SL_n(\ZZ)$]{Borel
  reductions of profinite actions of $\SL_n(\ZZ)$}
\author{Samuel Coskey}
\address{The Graduate Center of The City University of
  New York, Mathematics Program, 365 Fifth Avenue, New York, NY 10016}
\email{scoskey@nylogic.org}
\urladdr{http://math.rutgers.edu/$\mathord\sim$scoskey}
\subjclass[2000]{03E15; 20K15; 37A20}
\keywords{countable Borel equivalence relations, torsion-free abelian
  groups, superrigidity}
\begin{document}
\begin{abstract}
  Greg Hjorth and Simon Thomas proved that the classification problem
  for torsion-free abelian groups of finite rank \emph{strictly
    increases} in complexity with the rank.  Subsequently, Thomas
  proved that the complexity of the classification problems for
  $p$-local torsion-free abelian groups of fixed rank $n$ are
  \emph{pairwise incomparable} as $p$ varies.  We prove that if $3\leq
  m<n$ and $p,q$ are distinct primes, then the complexity of the
  classification problem for $p$-local torsion-free abelian groups of
  rank $m$ is again incomparable with that for $q$-local torsion-free
  abelian groups of rank $n$.
\end{abstract}
\maketitle

\section{Introduction}

This paper follows upon the methods introduced in \cite{hjorth} and
\cite{adamskechris}, and further specialized in \cite{super},
\cite{plocal}, and \cite{quasi}.  The theme of these papers is the
intersection of two related pursuits:
\begin{itemize}
\item the study of the general structure of the countable Borel
  equivalence relations, and
\item the particular case of the complexity of the classification
  problem for torsion-free abelian groups of finite rank.
\end{itemize}
At the heart of each is the use of powerful methods from
ergodic theory and the superrigidity theory of Lie groups.

The study of Borel equivalence relations begins with the observation
that many classification problems can be identified with an
equivalence relation on a standard Borel space (\emph{i.e.}, a Polish
space equipped just with its $\sigma$-algebra of Borel sets).  For
instance, each group with domain $\NN$ is determined by its group
operation, a subset of $\NN^3$.  Hence, the space of countable groups
may be identified with a subset $X_{\mathcal G}\subset\PP(\NN^3)$.
Studying the classification problem for countable groups thus amounts
to studying the isomorphism equivalence relation $\oiso_{\mathcal G}$
on $X_{\mathcal G}$.  The relation $\oiso_{\mathcal G}$ is extremely
complex in the intuitive sense that to check whether
$(\NN;\ord\times_1)\iso_{\mathcal G}(\NN;\ord\times_2)$, one must
conduct an unbounded search for a witnessing bijection
$\phi\from\NN\into\NN$.  This intuition is reflected in descriptive
set theory in part by the fact that $\oiso_{\mathcal G}$ is not a
Borel subset of $X_{\mathcal G}\times X_{\mathcal G}$.

However, there are many subcollections of the class of countable
groups whose isomorphism equivalence relation \emph{is} Borel.  For
instance, in this paper we will focus on the space of torsion-free
abelian groups of finite rank.  Since any torsion-free abelian group
of rank $n$ is isomorphic to a subgroup of $\QQ^n$, the space of
torsion-free abelian groups of rank $n$ can be identified with a
subset $R(n)\subset\mathcal P(\QQ^n)$.  Moreover, it is easily seen
that for $A,B\leq\QQ^n$, we have that $A\iso B$ iff there exists
$g\in\GL_n(\QQ)$ such that $B=g(A)$.  It follows easily that the
isomorphism equivalence relation $\oiso_n$ on $R(n)$ is a Borel
equivalence relation.

The Borel/non-Borel dichotomy is a useful one, but we will shortly
introduce a much finer notion of complexity which is specially
tailored for equivalence relations.  As a start, an equivalence
relation $E$ on the standard Borel space $X$ is said to be
\emph{smooth}, or completely classifiable, if there exists a standard
Borel space $Y$ and a Borel function $f\from X\into Y$ satisfying
\[x\mathrel{E}x'\iff f(x)=f(x')\;.
\]
In other words, $Y$ is a space of complete invariants for the
classification problem up to $E$.  The condition that $f$ is Borel
amounts to the requirement that the invariants can be computed in a
reasonably ``explicit'' manner.  For instance, the classification
problem for countable divisible groups is smooth.  Indeed, any
countable divisible group is decomposable into a product of Pr\"ufer
$p$-groups, and so any such group $A$ is determined up to isomorphism
by the sequence that lists the number of factors of each Pr\"ufer
group in a decomposition of $A$.

On the other hand, it follows from a 1937 result of Baer that even the
classification problem for torsion-free abelian groups of rank $1$ is
not smooth.  To explain this, however, we must first define the notion
of \emph{Borel reducibility}.  If $E,F$ are equivalence relations on
the standard Borel spaces $X,Y$, then we say $E$ is Borel reducible to
$F$ and write $E\leq_BF$ iff there exists a Borel function
$f\from X\into Y$ satisfying
\[x\mathrel{E}x'\iff f(x)\mathrel{F}f(x')\;.
\]
We then let $E\sim_BF$ iff $E\leq_BF$ and $F\leq_BE$, $E\perp_BF$ iff
$E\not\leq_BF$ and $F\not\leq_BE$, and finally $E<_BF$ iff $E\leq_BF$
and $E\not\sim_BF$.  In these terms, Baer's result implies that
$\oiso_1\sim_BE_0$, where $E_0$ is the equivalence relation defined on
$2^\NN$ by $x\mathrel{E}_0y$ iff $x(n)=y(n)$ for all but finitely many
$n$.  It is an elementary fact that $E_0$ is nonsmooth (in fact it is
the $\leq_B$-least nonsmooth Borel equivalence relation), and so it
follows that $\oiso_1$ is nonsmooth as well.

For a span of 60 years following Baer's result, the classification
problem for torsion-free abelian groups of rank $2$ and higher
remained open.  Although Kurosh and Malcev wrote down complete
invariants for torsion-free abelian groups of rank $2$, they were
considered inadequate as a solution to the classification problem
because it was as difficult to distinguish the invariants as it was
the groups themselves.  In 1998, Hjorth proved in \cite{hjorth} that
$E_0<_B\oiso_2$, and hence that the classification problem for
torsion-free abelian groups of rank $2$ is indeed strictly more
complicated than that for rank $1$.  Hjorth's solution did not provide
any method for dealing with the torsion-free abelian groups of rank
greater than $2$.  In particular, it remained open whether $\oiso_2$
is universal for all torsion-free abelian groups of finite rank, and
if it's not, then whether $\oiso_3$ is universal, and so on.

This question was of major interest since the $\oiso_n$ are examples
\emph{countable Borel equivalence relations}, and it was unknown at
the time whether there could be an infinite strictly ascending chain
of countable Borel equivalence relations.  Here, a Borel equivalence
relation $E$ is said to be \emph{countable} iff every $E$-class is
countable.  For instance, let $\Gamma$ be a countable group and
suppose that $\Gamma$ acts in a Borel fashion on the standard Borel
space $X$.  Then the induced \emph{orbit equivalence relation}
$E_\Gamma$, defined on $X$ by
\[x\mathrel{E}_\Gamma y\iff\Gamma x=\Gamma y\;,
\]
is clearly countable and easily seen to be Borel.  For instance, by
our earlier remarks concerning the space $R(n)$ of torsion-free
abelian groups of rank $n$, we have that the isomorphism relation
$\oiso_n$ is exactly the orbit equivalence relation on $R(n)$ induced
by the action of $\GL_n(\QQ)$.  By an amazing result of Feldman and
Moore \cite{feldmanmoore}, \emph{every} countable Borel equivalence
relation arises as the orbit equivalence relation induced by a Borel
action of some countable group.

Returning to Hjorth's question of whether $\oiso_3$ is more complex
than $\oiso_2$, the first progress was made by Adams and Kechris in
\cite{adamskechris}, who answered the analogous question for the class
of rigid groups.  Here, a group $A$ is said to be \emph{rigid} iff its
only automorphisms are $\pm Id$.  Let $S(n)\subset R(n)$ denote the
subset consisting of just the rigid torsion-free abelian groups of
rank $n$, and let $\oiso_n^*$ be the restriction of the isomorphism
equivalence relation to $S(n)$.  Adams and Kechris proved the
following:

\begin{thm*}[\protect{\cite[Theorem~6.1]{adamskechris}}]
  For all $n$, we have $\oiso_n^*<_B\oiso_{n+1}^*$.
\end{thm*}

This was one of the earliest results in the subject which separated
two known equivalence relations; indeed, before this result there were
only six known countable Borel equivalence relations up to Borel
bireducibility.  The proof made use of some powerful results from the
ergodic theory of lattices in Lie groups, most notably, Zimmer's
cocycle superrigidity theorem.  The reader who is familiar with
Zimmer's theorem may wonder exactly how it is relevant to this
problem.  But recall that $\oiso_n^*$ is induced by the action of
$\GL_n(\QQ)$ on $S(n)$, and note the following two facts:
\begin{itemize}
\item There exists an ergodic, $\SL_n(\ZZ)$-invariant probability
  measure on $S(n)$ (see \cite{hjorth} or
  \cite[Theorem~2.4]{thomas_survey}), and
\item $\SL_n(\ZZ)$ is a lattice in the higher-rank simple Lie group
  $\SL_n(\RR)$ (see \cite[Theorem~3.1.7]{zimmer}).
\end{itemize}
Of course, more is necessary to meet the hypotheses Zimmer's theorem,
and even then Adams and Kechris expended a great deal of effort to
extract information from its conclusion.  Shortly after this was done,
Thomas was able to refine in \cite{torsionfree} the method of Adams
and Kechris to fully answer the question on the complexity of the
isomorphism problem for torsion-free abelian groups of rank $3$ and
higher.

\begin{thm*}[\protect{\cite[Theorem~1.4]{thomas_survey}}]
  For all $n$, we have $\oiso_n<_B\oiso_{n+1}$.
\end{thm*}

As a stepping stone towards this result, Thomas proved the analogous
result for the quasi-isomorphism problem.  Here, we say that subgroups
$A,B\leq\QQ^n$ are \emph{quasi-isomorphic} iff $B$ is commensurable
with an isomorphic copy of $A$.  Let $\oqiso_n$ denote the
quasi-isomorphism equivalence relation on the space $R(n)$ of
torsion-free abelian groups of rank $n$.

\begin{thm*}[\protect{\cite[Theorem~4.6]{thomas_survey}}]
  For all $n$, we have $\oqiso_n<_B\oqiso_{n+1}$.
\end{thm*}

These results of Adams-Kechris and Thomas provided the first examples
of infinite chains of \emph{naturally occurring} classification
problems.  The proofs again made use of Zimmer's cocycle superrigidity
theorem for lattices in higher rank Lie groups.  Very loosely
speaking, at the heart of the proof that
$\oiso_{n+1}\not\leq_B\oiso_n$ is the simple observation that the
``dimension'' of $\SL_{n+1}(\ZZ)$ is larger than that of $\SL_n(\ZZ)$
(or more precisely, the rank of the ambient Lie group $\SL_{n+1}(\RR)$
is larger than that of $\SL_n(\RR)$).

Thomas later gave an example of an infinite \emph{antichain} of
naturally occurring equivalence relations.  Recall that a torsion-free
abelian group $A$ is said to be \emph{$p$-local} iff it is
$q$-divisible for every prime $q\neq p$.  Let $\oiso_{n,p}$ denote the
isomorphism equivalence relation (and $\oqiso_{n,p}$ the
quasi-isomorphism relation) on the space of $p$-local torsion-free
abelian groups of rank $n$.  Thomas proved the following:

\begin{thm*}[\protect{\cite[Theorem~1.2 and implicit]{plocal}}]
  Let $p,q$ be distinct primes and $n\geq3$.  Then we have:
  \begin{itemize}
  \item $\oiso_{n,p}\perp_B\oiso_{n,q}$, and
  \item $\oqiso_{n,p}\perp_B\oqiso_{n,q}$.
  \end{itemize}
\end{thm*}

Before this theorem, every Borel non-reducibility result in the area
of torsion-free abelian groups had relied on some notion of the
dimension of (the ambient Lie group of) the acting group as an
invariant.  The significance of this result is that this dimension is
fixed, since of course both $\oqiso_{n,p}$ and $\oqiso_{n,q}$ are
induced by actions of the same group.

This left open the question of whether the locality prime $p$ could be
used to distinguish between isomorphism relations when the dimension
is \emph{not} fixed.

\begin{thm*}
  Let $p,q$ be distinct primes and $m,n\geq3$.  Then we have:
  \begin{enumerate}
  \item[\textbf{A.}] $\oiso_{m,p}\perp_B\oiso_{n,q}$, and
  \item[\textbf{B.}] $\oqiso_{m,p}\perp_B\oqiso_{n,q}$.
  \end{enumerate}
\end{thm*}

More generally, one might ask what role the dimension plays in
deciding whether $E\leq_BF$.  Theorems~A and B shed some light on this
question, since in these cases the dimension has no effect so long as
it is greater than $2$.  Theorem~A will be established in Corollary
\ref{mainthmp1}, and Theorem~B in Corollary \ref{mainthmp2}.  These
results unfortunately leave open a slightly more technical question,
based on the following result from \cite{quasi}.

\begin{thm*}[\protect{\cite[Theorem~B]{quasi}}]
  If $n\geq3$, then $\oiso_{n,p}$ is Borel incomparable with
  $\oqiso_{n,p}$.
\end{thm*}

It would be extremely interesting to know if the
isomorphism/quasi-isomorphism distinction is sufficient to establish
Borel incomparability between the two classification problems, again
even as the dimension increases.

\begin{conj*}
  For $m,n\geq3$ and $p,q$ prime, we have
  $\oiso_{m,p}\perp_B\oqiso_{n,q}$.
\end{conj*}

The substantial case is when $q=p$, since if $q\neq p$ then this can
easily be shown using the methods in this paper.

The remainder of this paper is organized as follows.  In the next
section, we discuss some properties of the action of a dense subgroup
of a compact group $K$ on homogeneous $K$-spaces.  We then state and
prove a result due to Furman which implies that these actions exhibit
some intrinsic rigidity.  We shall pay particular attention to the
Grassmann space consisting of all linear subspaces of $\QQ_p^n$
together with its $\SL_n(\ZZ)$-action.  In the third section we shall
state a superrigidity result of Ioana, and use it to establish the
Borel incomparability of some natural equivalence relations on
Grassmann space.  In the last section, we explain how the isomorphism
and quasi-isomorphism equivalence relations can be viewed as
equivalence relations on Grassmann spaces, and use this together with
the results of Section~3 to prove Theorems~A and B.

I would like to acknowledge Simon Thomas for pointing out this line of
research, Scott Schneider for helpful conversations on this
subject, and the referee for pointing out several inaccuracies.

\section{Homogeneous spaces of compact groups}

In this section, we give an introduction to homogeneous spaces of
compact groups and affine maps between them.  We then give the
definition of ergodicity of a general measure-preserving action, and a
characterization of ergodicity in the case of homogeneous spaces.
Finally, we present two lemmas (due to Gefter and Furman), which
loosely speaking imply that if $\Gamma,\Lambda$ act ergodically on
homogeneous spaces, then any conjugacy between these actions comes
from an affine map.

If $K$ is a compact group, then a \emph{homogeneous} $K$-space is a
standard Borel space $X$ together with a transitive Borel action of
$K$ on $X$.  If $X$ is a homogeneous $K$-space, then $X$ is isomorphic
as a $K$-space to the left coset space $K/L$, where $L\leq K$ is the
stabilizer of an arbitrary point $x\in X$.  Hence, $X$ admits a
$K$-invariant \emph{Haar measure}, namely the push-forward to $X$ of
the usual Haar measure on $K$.

For instance, let $\Gr_k(\QQ_p^m)$ denote the \emph{Grassmann space}
of all $k$-dimensional subspaces of $\QQ_p^m$.  By
\cite[Proposition~6.1]{super}, the compact group $\SL_m(\ZZ_p)$ acts
transitively on $\Gr_k(\QQ_p^m)$, and it follows that $\Gr_k(\QQ_p^m)$
is a homogeneous $\SL_m(\ZZ_p)$-space.  For purely \ae sthetic
reasons, we sometimes denote $\Gr_1(\QQ_p^m)$ instead by
$\PP(\QQ_p^m)$.

\begin{defn}
  For $i=0,1$, let $K_i$ be a compact group and $L_i$ a closed
  subgroup.  A map $f\from K_0/L_0\into K_1/L_1$ between homogeneous
  spaces is said to be \emph{affine} iff there exists a homomorphism
  $\Phi\from K_0\into K_1$ and $t\in K_1$ such that
  $f(kL_0)=\Phi(k)tL_1$ for almost all $k\in K_0$.
\end{defn}

Affine maps are the natural morphisms between homogeneous spaces of
compact groups.  It is trivial to see that any affine map
$f(kL_0)=\Phi(k)tL_1$ has the property that the pair $(\Phi,f)$ is a
\emph{homomorphism of permutation groups}, in the sense that
$f(kx)=\Phi(k)f(x)$ for all $x\in K_0/L_0$.  Lemmas~\ref{lem_affine}
and \ref{lem_extends}, taken together, provide a very strong converse
to this observation.  First, we shall need to introduce the notion of
ergodicity of a measure-preserving action.

Let $\Gamma$ be a countable group acting on the standard Borel space
$X$ (which we denote by $\Gamma\actson X$), and suppose the action
preserves a probability measure on $X$.  Then the action
$\Gamma\actson X$ is said to be \emph{ergodic} iff every
$\Gamma$-invariant measurable subset $A\subset X$ has either
$\mu(A)=0$ or $\mu(A)=1$.  We shall have more use for the following
equivalent formulation of this property: $\Gamma\actson X$ is ergodic
iff for every standard Borel space $Y$ and every $\Gamma$-invariant
Borel function $\beta\from X\into Y$, we have that $\beta$ is
constant on a conull set.

For instance, if $X$ is a homogeneous $K$-space and $\Gamma$ is a
countable subgroup of $K$, then $\Gamma$ acts on $X$ and preserves the
Haar measure.  It is easily seen that in this case, $\Gamma\actson X$
is ergodic iff $\Gamma$ is dense in $K$.  The statement and proof of
Lemma~\ref{lem_affine} were extracted from \cite[Theorem~3.3]{gefter}.

\begin{lem}
  \label{lem_affine}
  For $i=0,1$ let $K_i/L_i$ be a homogeneous space for the compact
  group $K_i$, let $\Gamma_i<K_i$ be a countable dense subgroup, and
  suppose that
  \[(\phi,f)\from\Gamma_0\actson K_0/L_0\longrightarrow\Gamma_1\actson K_1/L_1
  \]
  is a homomorphism of permutation groups.  If $\phi$ extends to a
  homomorphism $\Phi\from K_0\into K_1$, then after adjusting $f$ on
  a set of measure zero, $f$ is an affine map.
\end{lem}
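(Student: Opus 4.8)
The plan is to show that the permutation‑group homomorphism $f\from K_0/L_0 \to K_1/L_1$ agrees almost everywhere with the affine map $kL_0 \mapsto \Phi(k)tL_1$, where $t$ is chosen so that this formula is consistent with the value of $f$ at a single point. First I would pick a point $x_0 = e L_0 \in K_0/L_0$ and, using that $f$ is defined almost everywhere, choose a representative so that $f(x_0) = tL_1$ for some $t \in K_1$. Define $\tilde f\from K_0/L_0 \to K_1/L_1$ by $\tilde f(kL_0) = \Phi(k)tL_1$; this is well defined provided $\Phi(L_0) \subseteq tL_1t^{-1}$, which I will need to verify, and it is manifestly affine. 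The goal is then to prove $f = \tilde f$ almost everywhere.

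**The key trick.** The idea is to consider the ``comparison'' map $\beta\from K_0/L_0 \to K_1$ (or into some quotient of $K_1$) that measures the discrepancy between $f$ and $\tilde f$, and to show it is $\Gamma_0$‑invariant, hence constant almost everywhere by ergodicity of $\Gamma_0 \actson K_0/L_0$ (which holds since $\Gamma_0$ is dense in $K_0$). Concretely, for almost every $kL_0$ both $f(kL_0)$ and $\tilde f(kL_0) = \Phi(k)tL_1$ are defined, and I want to compare them. Pulling back by $\Phi(k)^{-1}$, set $\beta(kL_0) = \Phi(k)^{-1}f(kL_0) \in K_1/L_1$. For $\gamma \in \Gamma_0$, using that $(\phi, f)$ is a homomorphism of permutation groups and that $\phi = \Phi|_{\Gamma_0}$, we get
\[
\beta(\gamma k L_0) = \Phi(\gamma k)^{-1} f(\gamma k L_0) = \Phi(k)^{-1}\Phi(\gamma)^{-1}\phi(\gamma) f(k L_0) = \Phi(k)^{-1} f(kL_0) = \beta(kL_0),
\]
so $\beta$ is $\Gamma_0$‑invariant. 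By ergodicity $\beta$ is equal almost everywhere to a constant, say $\beta(kL_0) = sL_1$ for a.e.\ $k$. Evaluating at $k$ with $kL_0 = x_0$ (i.e.\ $k \in L_0$, using a conull set of such — or more carefully, using continuity/density to pin down the constant) identifies $s$ with $t$, giving $f(kL_0) = \Phi(k)tL_1 = \tilde f(kL_0)$ for a.e.\ $k$, as desired. I would also need to double‑check the well‑definedness condition $\Phi(L_0) \subseteq tL_1t^{-1}$: this follows because once $f = \tilde f$ a.e., the fact that $f$ is a genuine map $K_0/L_0 \to K_1/L_1$ forces $\Phi(\ell)tL_1 = tL_1$ for $\ell$ in a conull subset of $L_0$, and then a density/closedness argument upgrades this to all of $L_0$.

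**The main obstacle.** The delicate point is the bookkeeping around ``almost everywhere'': $f$ is only defined up to a null set, the permutation‑group homomorphism identity $f(\gamma x) = \phi(\gamma)f(x)$ a priori holds only for each fixed $\gamma$ on a conull set (whose intersection over all $\gamma \in \Gamma_0$ is still conull since $\Gamma_0$ is countable), and the constant value produced by ergodicity must be correctly identified with $t$. The cleanest way to handle the identification of the constant is probably to not fix $x_0$ in advance but to run the ergodicity argument first, obtain $f(kL_0) = \Phi(k)sL_1$ a.e.\ for some $s$, and then simply \emph{define} $t = s$ — then affineness is immediate and the only remaining task is the compatibility $\Phi(L_0) \subseteq tL_1t^{-1}$, which I would extract from the a.e.\ identity together with the genuine (everywhere‑defined, after modification) functional equation. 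So I expect the ergodicity step itself to be short; the real work is the measure‑theoretic hygiene ensuring every ``a.e.'' claim is legitimate and that modifying $f$ on a null set genuinely produces an everywhere‑defined affine map consistent with the original homomorphism of permutation groups.
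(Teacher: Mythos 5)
Your overall strategy — define a ``discrepancy'' map $\beta$ involving $\Phi(k)^{-1}f(kL_0)$, show it is $\Gamma_0$-invariant using the functional equation $f(\gamma x)=\phi(\gamma)f(x)$, and kill it with ergodicity — is exactly Gefter's argument, which is what the paper uses. Your ``main obstacle'' paragraph is also on target: one should not fix $x_0$ in advance; one should let ergodicity produce the constant $t$, and then the only real content left is the well-definedness check $\Phi(L_0)\subseteq tL_1t^{-1}$.

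However, there is a genuine (though easily repaired) gap in the middle of your argument. You declare $\beta$ to be a map $K_0/L_0 \to K_1/L_1$ defined by $\beta(kL_0)=\Phi(k)^{-1}f(kL_0)$, and then invoke ergodicity of $\Gamma_0 \actson K_0/L_0$. But this formula does \emph{not} descend to the coset space: replacing $k$ by $k\ell$ with $\ell\in L_0$ leaves $f(kL_0)$ unchanged but changes $\Phi(k)^{-1}$ to $\Phi(\ell)^{-1}\Phi(k)^{-1}$, so the value shifts by $\Phi(\ell)^{-1}$. Well-definedness of $\beta$ on $K_0/L_0$ would require precisely the conclusion $\Phi(L_0)\subseteq\mathrm{Stab}(\beta)$ that you are trying to prove, so as written the argument is circular. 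Moreover, your computation of $\Gamma_0$-invariance, $\beta(\gamma kL_0)=\Phi(\gamma k)^{-1}f(\gamma kL_0)$, implicitly treats $\gamma k$ as the distinguished lift of $\gamma kL_0$, which it need not be. The fix is what the paper does: regard $\beta$ as a Borel map on $K_0$ itself, $\beta(k)=\Phi(k)^{-1}f(kL_0)$, and use that $\Gamma_0 \actson K_0$ (not $K_0/L_0$) is ergodic because $\Gamma_0$ is dense. Then $\beta(\gamma k)=\beta(k)$ holds literally for all $\gamma\in\Gamma_0$, ergodicity gives $\beta(k)=tL_1$ on a conull $K_0^*\subseteq K_0$, and you obtain $f(kL_0)=\Phi(k)tL_1$ for $k\in K_0^*$.

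One more point worth making precise, on the well-definedness check you defer to the end. Given $\ell\in L_0$, you need a $k\in K_0^*$ with $k\ell\in K_0^*$ so you can compare $f(kL_0)$ and $f(k\ell L_0)$; this exists because right Haar measure on a compact group has the same null sets as left Haar measure, so $K_0^*\ell^{-1}$ is still conull and meets $K_0^*$. Then $\Phi(k)tL_1=f(kL_0)=f(k\ell L_0)=\Phi(k)\Phi(\ell)tL_1$ forces $\Phi(\ell)\in tL_1t^{-1}$. Your sketch gestures in the right direction but omits this specific unimodularity/null-set step.
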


\begin{proof}
  Following Gefter's argument, define the map $\beta\from K_0\into
  K_1/L_1$ by
  \[\beta(k)\defeq\Phi(k)^{-1}f(kL_0)\;.
  \]
  We first observe that $\beta$ is $\Gamma_0$-invariant.  Indeed, for
  $\gamma\in\Gamma_0$, we compute that:
  \begin{align*}
    \beta(\gamma k)&=\Phi(\gamma k)^{-1}f(\gamma kL_0)\\
    &=\Phi(k)^{-1}\Phi(\gamma)^{-1}\phi(\gamma)f(kL_0)\\
    &=\Phi(k)^{-1}f(kL_0)\\
    &=\beta(k)\;.
  \end{align*}
  Now, since $\Gamma_0$ is a dense subgroup of $K_0$, the action
  $\Gamma_0\actson K_0$ is ergodic.  Hence, there exists $t\in K_1$
  such that for almost every $k\in K_0$, we have that $\beta(k)=tL_1$.
  In other words, there exists a conull subset $K_0^*\subset K_0$ such
  that for all $k\in K_0^*$ we have the identity
  $f(kL_0)=\Phi(k)tL_1$.  Now, we will be done if we show that the
  function $f'(kL_0)\defeq\Phi(k)tL_1$ is well-defined, for then $f'$
  is an affine map which is equal to $f$ almost everywhere.

  For this, a moment's pause reveals that $f'$ is well-defined if and
  only if $\Phi(L_0)=tL_1t^{-1}$.  Now, given $\ell\in L_0$, choose
  $k\in K_0^*$ such that also $k\ell\in K_0^*$.  (This is possible:
  the right Haar measure has the same null sets as the left Haar
  measure, so $K_0^*\ell^{-1}$ is non-null.)  We now have:
  \begin{align*}
    \Phi(k)tL_1&=f(kL_0)\\
    &=f(k\ell L_0)\\
    &=\Phi(k\ell)tL_1\\
    &=\Phi(k)\Phi(\ell)tL_1\;.
  \end{align*}
  It follows that $tL_1=\Phi(\ell)tL_1$ and so $\Phi(\ell)\in
  tL_1t^{-1}$, which completes the proof.
\end{proof}

Although the proof Lemma~\ref{lem_affine} was a key point in
\cite{quasi}, it will not be explicitly needed in this paper.
However, it clearly goes hand-in-hand with Lemma~\ref{lem_extends},
which will be used crucially in the next section.  The statement and
proof of Lemma~\ref{lem_extends} were easily adapted from
\cite[Proposition~7.2]{furman}.

\begin{lem}
  \label{lem_extends}
  For $i=0,1$, let $\Gamma_i\actson K_i/L_i$ be as in
  Lemma~\ref{lem_affine}.  Suppose additionally that the action
  $K_1\actson K_1/L_1$ has trivial kernel.  Suppose that
  $\phi\from\Gamma_0\into\Gamma_1$ is a surjective homomorphism and
  that
  \[(\phi,f)\from\Gamma_0\actson K_0/L_0\longrightarrow\Gamma_1\actson K_1/L_1
  \]
  is a homomorphism of permutation groups.  Then $\phi$ extends to a
  homomorphism $\Phi\from K_0\into K_1$.
\end{lem}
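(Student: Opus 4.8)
The plan is to reconstruct the homomorphism $\Phi$ from the data $(\phi,f)$ by exploiting the fact that $f$ is an isomorphism of the permutation group actions and that $K_1$ can be recovered from its action on $K_1/L_1$ (since that action has trivial kernel). The key idea, following Furman, is that although $f\from K_0/L_0\to K_1/L_1$ need not intertwine the full group actions, it gives a Borel cocycle-like comparison between $\phi(\gamma)$ and the ``rotation by $k$'' elements of $K_1$, and this comparison can be averaged using the density of $\Gamma_0$ in $K_0$.

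First I would set up the space where the extension lives. For $k\in K_0$, consider the map $x\mapsto f^{-1}(k^{-1}\cdot f(x))$ on $K_0/L_0$ — wait, more carefully: since $f$ conjugates $\Gamma_0\actson K_0/L_0$ to $\Gamma_1\actson K_1/L_1$, for each $\gamma\in\Gamma_0$ we have $f(\gamma x)=\phi(\gamma)f(x)$. I want to define, for $k\in K_0$, an element $\Phi(k)\in K_1$ characterized by $f(kx)=\Phi(k)f(x)$ for a.e.\ $x$; the surjectivity of $\phi$ and the triviality of the kernel of $K_1\actson K_1/L_1$ should force such a $\Phi(k)$ to be \emph{unique} if it exists. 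So the real content is \emph{existence}: for each fixed $k\in K_0$, the Borel map $x\mapsto f(kx)f(x)^{-1}$ (interpreted appropriately on coset representatives, or better, the map into $\mathrm{Sym}$ given by $x\mapsto [y\mapsto f(k f^{-1}(y))]$) must be essentially constant equal to some $\Phi(k)\in K_1$. To get this, I would consider the map $g_k\from K_1/L_1\to K_1/L_1$ defined a.e.\ by $g_k = f\circ (k\cdot)\circ f^{-1}$, and show it commutes with the $\Gamma_1$-action: indeed $g_k(\phi(\gamma)y) = f(k\gamma f^{-1}(y)) = f(\gamma k f^{-1}(y)) = \phi(\gamma) g_k(y)$, using that $K_0$ is abelian — no, $K_0$ need not be abelian. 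Let me instead use the \emph{right} translation: define things so that left $\Gamma_0$-translation and the $\Phi$-candidate (a left translation by $k$) genuinely commute only in the quotient; the correct move is to show $g_k$ is $\Gamma_1$-equivariant a.e., then invoke that a $\Gamma_1$-equivariant a.e.\ Borel self-map of the ergodic homogeneous space $K_1/L_1$ is a.e.\ equal to left translation by a single element of $K_1$ (this is exactly the $\Gamma_0 = \Gamma_1$, $f=\mathrm{id}$ instance of the rigidity phenomenon, provable by the same Gefter-style $\beta(k)=\Phi(k)^{-1}f(kL_0)$ ergodicity argument as in Lemma~\ref{lem_affine}, together with trivial kernel to pin down the element). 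This yields a well-defined $\Phi(k)\in K_1$ for a.e.\ $k\in K_0$, hence (by a standard argument removing null sets, or by the fact that the defining relation extends $\Phi$ to all of $K_0$ via $\Phi(k) := \Phi(k k_0^{-1})\Phi(k_0)$ for a generic $k_0$) for all $k$.

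Next I would verify that $\Phi$ is a homomorphism and that it is Borel (measurable), hence continuous since $K_0, K_1$ are compact and a measurable homomorphism between Polish groups is continuous. The homomorphism property is a routine manipulation: from $f(k_1 k_2 x) = \Phi(k_1 k_2) f(x)$ and $f(k_1(k_2 x)) = \Phi(k_1) f(k_2 x) = \Phi(k_1)\Phi(k_2) f(x)$, the uniqueness of the element implementing a given translation (again from trivial kernel) gives $\Phi(k_1 k_2) = \Phi(k_1)\Phi(k_2)$. Measurability of $\Phi$ follows from measurability of $f$ and of the map sending a $\Gamma_1$-equivariant transformation to its implementing group element (or one can argue $\mathrm{graph}(\Phi)$ is analytic, hence $\Phi$ is Baire-measurable, hence continuous). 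Finally, restricting the relation $f(kx)=\Phi(k)f(x)$ to $k=\gamma\in\Gamma_0$ and comparing with $f(\gamma x) = \phi(\gamma)f(x)$ gives $\Phi(\gamma)f(x)=\phi(\gamma)f(x)$ for a.e.\ $x$, so $\Phi(\gamma)^{-1}\phi(\gamma)$ fixes a.e.\ point of $K_1/L_1$, and triviality of the kernel of $K_1\actson K_1/L_1$ forces $\Phi(\gamma)=\phi(\gamma)$. Thus $\Phi$ extends $\phi$.

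The main obstacle I expect is the existence step — producing $\Phi(k)$ for a single $k\in K_0$ — because it is precisely here that one must convert the ``a.e.'' intertwining of $f$ into an honest group element, and this requires the ergodicity of $\Gamma_1\actson K_1/L_1$ (equivalently density of $\Gamma_1$ in $K_1$, which is where surjectivity of $\phi$ and density of $\Gamma_0$ in $K_0$ get used) together with the trivial-kernel hypothesis to nail down uniqueness. Everything after that (homomorphism property, continuity, agreement with $\phi$) is bookkeeping. I would lean on the same $\beta$-function device as in the proof of Lemma~\ref{lem_affine}: set $\beta(k,k') := $ the ``displacement'' comparing $f(kf^{-1}(k'L_1))$ to an expected value, show it is $\Gamma_0$-invariant in the first variable, and apply ergodicity of $\Gamma_0\actson K_0$.
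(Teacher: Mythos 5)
There is a genuine gap at the crux of your argument, and you actually spot it yourself before papering over it. To build $\Phi(k)$ one element $k\in K_0$ at a time, you want the map $g_k := f\circ(k\cdot)\circ f^{-1}$ on $K_1/L_1$ to be $\Gamma_1$-equivariant so that ergodicity can force it to be a single left translation. As you note, the computation $g_k(\phi(\gamma)y) = f(k\gamma f^{-1}(y))$ needs $k\gamma = \gamma k$, i.e.\ $K_0$ abelian, which is false; switching to ``right translation'' does not repair this, because right translations of $K_0$ do not descend to $K_0/L_0$ unless one is in the normalizer of $L_0$. The $\beta$-device from Lemma~\ref{lem_affine} is also not available yet, since forming $\beta(k)=\Phi(k)^{-1}f(kL_0)$ presupposes the very $\Phi$ you are trying to build. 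So there simply is no $\Gamma_0$- or $\Gamma_1$-invariant Borel function in sight whose essential constancy would hand you $\Phi(k)$ for a single fixed $k$: the ``existence step'' you correctly flag as the main obstacle is exactly where the argument fails as written.

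The paper sidesteps this entirely by never fixing a single $k$. It first proves $f$ is measure-preserving (this uses \emph{unique} ergodicity of the dense-subgroup action $\Gamma_1\actson K_1/L_1$, a point your sketch omits and silently needs), then pushes $\mu_0$ along the graph of $f$ to a measure $\nu$ on $K_0/L_0\times K_1/L_1$, and forms the stabilizer
\[
R=\set{(k_0,k_1)\in K_0\times K_1\mid(k_0,k_1)_*\nu=\nu}\;.
\]
This $R$ is a \emph{closed} subgroup of $K_0\times K_1$ containing the graph of $\phi$, so density of $\Gamma_0$ in $K_0$ and of $\Gamma_1=\phi(\Gamma_0)$ in $K_1$ give $\pi_i(R)=K_i$ without ever exhibiting a witness over a particular $k$. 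The trivial-kernel hypothesis and measure preservation then kill $R_1 = R\cap(\{e\}\times K_1)$, so $R$ is the graph of a continuous homomorphism $\Phi$ extending $\phi$. In short: you need the global, group-theoretic object $R$ built from the graph measure; the pointwise construction of $\Phi(k)$ via equivariance of $g_k$ is not available in the non-abelian setting, and your sketch does not supply a substitute.
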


\begin{proof}
  We first observe that $f$ is measure-preserving.  Indeed, letting
  $\mu_i$ denote the Haar measure on $K_i/L_i$, since $\phi$ is
  surjective we have that $f_*\mu_0$ is $\Gamma_1$-invariant.  Now, it
  is well-known that since $\Gamma_1$ is a dense subgroup of $K_1$, we
  not only have that $\Gamma_1\actson K_1/L_1$ is ergodic but also
  that it is \emph{uniquely ergodic}.  Here, an action $\Lambda\actson
  Y$ is said to be uniquely ergodic iff there exists a unique
  $\Lambda$-invariant probability measure on $Y$.  Clearly, it follows
  from this property that $f_*\mu_0=\mu_1$, so that $f$ is
  measure-preserving.

  Now, let $\nu$ be the lift of $\mu_0$ to the measure on
  $K_0/L_0\times K_1/L_1$ concentrating on the graph of $f$.  In other
  words, for $A\subset K_0/L_0\times K_1/L_1$, let
  \[\nu(A)\defeq\mu_0\set{x\in K_0/L_0\mid(x,f(x))\in A}\;.
  \]
  Next, we let
  \[R\defeq\set{(k_0,k_1)\in K_0\times K_1\mid(k_0,k_1)_*\nu=\nu}\;.
  \]
  It is easy to see that $R$ is a closed (and hence compact) subgroup
  of $K_0\times K_1$.  Moreover, it follows from the fact that
  $(\phi,f)$ is a homomorphism of permutation groups that $R$ contains
  the graph of $\phi$.  Hence, by the density of $\Gamma_i$ in $K_i$,
  we have $\pi_i(R)=K_i$, where $\pi_i$ is the canonical projection
  onto $K_i$.

  Now, consider the normal subgroups
  \begin{align*}
    R_0&\defeq\set{k_0\in K_0\mid(k_0,e)\in R}\triangleleft K_0\;,
      \textrm{ and}\\
    R_1&\defeq\set{k_1\in K_1\mid(e,k_1)\in R}\triangleleft K_1\;.
  \end{align*}
  Then loosely speaking, $R_1$ measures how far $R$ is from being the
  graph of a function.  And if $R$ is the graph of a function, then
  $R_0$ is the kernel of that function.
  
  \begin{claim*}
    $R_1=1$, and thus $R$ is the graph of a function.
  \end{claim*}
  \begin{claimproof}
    Let $k_1\in R_1$ be arbitrary, so that $(e,k_1)_*\nu=\nu$.  This
    means that for all measurable sets $A\subset K_0/L_0\times
    K_1/L_1$, we have that $\nu((e,k_1)^{-1}A)=\nu(A)$.  Appealing to
    the definition of $\nu$, we have that
    \[\mu_0\set{x\in K_0/L_0\mid(x,k_1f(x))\in A}
    = \mu_0\set{x\in K_0/L_0\mid(x,f(x))\in A}\;.
    \]
    Applying this in the case that $A$ is the graph of $f$, it follows
    that
    \[\mu_0\set{x\in K_0/L_0\mid k_1f(x)=f(x)}=1\;.
    \]
    Since $f$ is measure-preserving, we can conclude that
    \[\mu_1\set{y\in K_1/L_1\mid k_1y=y}=1\;.
    \]

    We have shown that $k_1$ fixes almost every point of $K_1/L_1$,
    and hence that $k_1\in L_1$.  Thus, we have that $R_1$ is a normal
    subgroup of $K_1$ which is contained in $L_1$.  It follows that
    $R_1$ is contained in the kernel of the action of $K_1$ on
    $K_i/L_i$, which we have assumed is trivial.
  \end{claimproof}
  Hence, $R$ is the graph of a homomorphism $\Phi\from K_0\into K_1$,
  and since $R$ contains the graph of $\phi$, we have that $\Phi$
  extends $\phi$.
\end{proof}

Let us make some further observations that help to explain the
hypotheses of the last result, and which will be useful later on when
we apply it.

\begin{rem}
  If in Lemma~\ref{lem_extends} we add the symmetric hypotheses that
  $\phi$ is injective and that $K_0\actson K_0/L_0$ has trivial
  kernel, then we can repeat the argument given in the Claim to show
  that $R_0=1$ and thus that $\Phi$ is injective.
\end{rem}

\begin{rem}
  When we apply Lemma~\ref{lem_extends}, we will unfortunately be
  interested in the case when $\phi$ is \emph{not} surjective.  To
  deal with this, consider the action of just $\phi(\Gamma_0)$ on
  $K_1/L_1$.  Since the map $x\mapsto\overline{\phi(\Gamma_0)}f(x)$ is
  $\Gamma_0$-invariant, we can use the ergodicity of $\Gamma_0\actson
  K_0/L_0$ to suppose that $f(X)$ is contained in some
  $\overline{\phi(\Gamma_0)}$-orbit, say $\overline{\phi(\Gamma_0)}z$.
  Now, $\overline{\phi(\Gamma_0)}z$ is naturally a homogeneous space
  for $\overline{\phi(\Gamma_0)}$, and we may replace $\Gamma_1\actson
  K_1/L_1$ with the action
  $\phi(\Gamma_0)\actson\overline{\phi(\Gamma_0)}z$.  We may then
  apply Lemma~\ref{lem_extends} to the latter action.
\end{rem}

\section{Superrigidity and Grassmann spaces}

The first goal of this section is to state a version of a
superrigidity theorem from ergodic theory due to Adrian Ioana.  The
conclusion of this theorem is slightly technical, and so we'll start
with the necessary definitions.  Afterwords, we shall use Ioana's
theorem to establish a template Borel incomparability result for the
actions $\SL_n(\ZZ)\actson\SL_n(\ZZ_p)$ as $n$ and $p$ vary.  We
conclude the section with Theorem~\ref{thm_glnq}, which is the key
result of the paper.  Theorem~\ref{thm_glnq} provides a strong form of
Borel incomparability for actions of $\GL_n(\QQ)$ on the $p$-adic
Grassmann spaces.

Suppose that $X$ is a standard Borel space, $\mu$ is a Borel
probability measure on $X$, and $\Gamma\actson X$ is ergodic with
respect to $\mu$.  If $\Lambda<\Gamma$ is an arbitrary subgroup, then
of course $\Lambda$ need not act ergodically on $X$.  However, if
$\Lambda$ is a subgroup of finite index in $\Gamma$, then it is not
difficult to see that there exists a $\Lambda$-invariant subset
$Z\subset X$ of positive measure such that $\Lambda\actson Z$ is
ergodic with respect to the restriction (and renormalization) of $\mu$
to $Z$.  Generally, we say that $\Lambda\actson Z$ is an \emph{ergodic
  component} for $\Gamma\actson X$ iff $\Lambda\leq\Gamma$ is a
subgroup of finite index, $Z\subset X$ is a $\Lambda$-invariant subset
of positive measure, and $\Lambda\actson Z$ is ergodic.

For example, suppose that the ergodic action $\Gamma\actson X$ has a
finite factor, that is, a finite $\Gamma$-space $X_0$ together with a
$\Gamma$-invariant and measure-preserving function $\pi\from X\into
X_0$.  Then the stabilizer $\Lambda_0$ in $\Gamma$ of any $x_0\in X_0$
is a subgroup of $\Gamma$ of finite index, and it is easy to check
that $\Lambda_0\actson\pi^{-1}(x_0)$ is an ergodic component for
$\Gamma\actson X$.

Ioana's theorem is about profinite group actions; these actions are
built up from their finite factors, and hence have a rich structure of
ergodic components.  Somewhat more precisely, if $\Gamma\actson X$ is
a probability measure-preserving action, then we say that
$\Gamma\actson X$ is \emph{profinite} iff as a $\Gamma$-space, $X$ is
the inverse limit of a directed system of finite measure-preserving
$\Gamma$-spaces.  For example, the action
$\SL_n(\ZZ)\actson\SL_n(\ZZ_p)$ is profinite; in this case
$\SL_n(\ZZ_p)$ is the inverse limit of the sequence of
$\SL_n(\ZZ)$-spaces given by $\SL_n(\ZZ/p^k\ZZ)$.  Similarly, since
$\Gr_k(\QQ_p^n)$ is a transitive $\SL_n(\ZZ_p)$-space, it is not hard
to see that $\Gr_k(\QQ_p^n)$ also carries the structure of a profinite
$\SL_n(\ZZ)$-space.  (In general, if $\Gamma\actson K$ is the inverse
limit of $\Gamma\actson K/K_n$ and $K$ acts transitively on $X$, then
$\Gamma\actson X$ is the inverse limit of $\Gamma\actson X_n$, where
$X_n$ is the set of $K_n$ orbits on $X$.)

The consequence of Ioana's superrigidity theorem which we will state
will give conditions under which any Borel homomorphism
\[f\from\Gamma\actson X\longrightarrow\Lambda\actson Y
\]
comes from a homomorphism of permutation groups
\[(\phi,f)\from\Gamma\actson X\longrightarrow\Lambda\actson Y\;.
\]
Here if $E,F$ are equivalence relations on $X,Y$, then a function
$f\from X\into Y$ is called a \emph{Borel homomorphism} from $E$ to
$F$ iff for all $x,x'\in X$,
\[x\mathrel{E}x'\implies f(x)\mathrel{F}f(x')\;.
\]
We have abused notation, so that any reference to Borel homomorphism
between actions will always refer to a Borel homomorphism between
their corresponding orbit equivalence relations.

We must remark that Ioana's theorem makes use of property (T), which
we shall not define.  It is sufficient for our purposes to note that
$\SL_n(\ZZ)$ has property (T) for $n\geq3$.  See \cite{lubotzky} for
the definition as well as a discussion of this key property.

\begin{thm}[\protect{\cite[Theorem~4.1]{ioana2}}]
  \label{thm_ioana}
  Suppose that $\Gamma$ is a countable discrete group with
  property~(T), and let $\Gamma\actson X$ be a free, ergodic and
  profinite action.  Let $\Lambda$ be a countable group,
  $\Lambda\actson Y$ a free action, and suppose that $f$ is a Borel
  homomorphism from $E_\Gamma$ to $E_\Lambda$.  Then there exists an
  ergodic component $\Gamma_0\actson X_0$ for $\Gamma\actson X$ and a
  homomorphism of permutation groups
  \[(\phi,f')\from\Gamma_0\actson X_0\longrightarrow\Lambda\actson Y
  \]
  such that for all $x\in X_0$, we have that
  $f'(x)\mathrel{E}_\Lambda f(x)$.
\end{thm}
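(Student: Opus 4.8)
The plan is to translate the statement into the language of cocycles, apply the cocycle-superrigidity theorem for profinite actions of property~(T) groups that underlies \cite[Theorem~4.1]{ioana2}, and then translate back.

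First I would attach a cocycle to $f$. Since $\Lambda\actson Y$ is free and $f$ is a Borel homomorphism from $E_\Gamma$ to $E_\Lambda$, for almost every $x\in X$ and every $\gamma\in\Gamma$ the points $f(\gamma x)$ and $f(x)$ lie in a common $\Lambda$-orbit, so there is a \emph{unique} element $\alpha(\gamma,x)\in\Lambda$ with $f(\gamma x)=\alpha(\gamma,x)f(x)$; uniqueness makes $\alpha\from\Gamma\times X\into\Lambda$ a Borel cocycle for $\Gamma\actson X$. Moreover, if we replace $f$ by $f'(x):=\rho(x)^{-1}f(x)$ for a Borel map $\rho\from X\into\Lambda$, then automatically $f'(x)\mathrel{E}_\Lambda f(x)$, and the cocycle of $f'$ is $\alpha'(\gamma,x)=\rho(\gamma x)^{-1}\alpha(\gamma,x)\rho(x)$. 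Hence it is enough to find a finite-index subgroup $\Gamma_0\leq\Gamma$, an ergodic component $X_0$ for $\Gamma_0\actson X$, a group homomorphism $\phi\from\Gamma_0\into\Lambda$, and a Borel map $\rho\from X_0\into\Lambda$ such that
\[
\alpha(\gamma,x)=\rho(\gamma x)\,\phi(\gamma)\,\rho(x)^{-1}\qquad\text{for }\gamma\in\Gamma_0\text{ and a.e. }x\in X_0.
\]
Given such data, the map $f'(x):=\rho(x)^{-1}f(x)$ together with $\phi$ is the desired homomorphism of permutation groups.

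The heart of the matter --- and the step I expect to be the main obstacle --- is this untwisting of $\alpha$, which is exactly where property~(T) and the profinite hypothesis enter. Writing $X=\varprojlim X_m$ as an inverse limit of finite measure-preserving $\Gamma$-spaces with factor maps $\pi_m\from X\into X_m$, each point $x_m\in X_m$ has a finite-index stabilizer $\Gamma_m\leq\Gamma$, and $\Gamma_m\actson\pi_m^{-1}(x_m)$ is an ergodic component. Because property~(T) passes to the finite-index subgroups $\Gamma_m$, a rigidity argument --- this is the substance of \cite[Theorem~4.1]{ioana2}, which I would quote rather than reprove, the rigidity input being substantial --- shows that $\alpha$ cannot genuinely depend on infinitely many coordinates of the inverse limit: on a suitable ergodic component it is cohomologous to a cocycle that factors through $\Gamma\times X\to\Gamma\times X_m$ for some finite $m$. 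Over such a component $\alpha$ is then a cocycle over a finite base, and a final application of the ergodicity of the component --- passing to a still smaller ergodic component on which the relevant finite quotient becomes a single point --- collapses it to an honest group homomorphism $\phi\from\Gamma_0\into\Lambda$ with a Borel cobounding function $\rho$.

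Finally I would assemble the pieces. With $\Gamma_0$, $X_0$, $\phi$, $\rho$ as above and $f'(x):=\rho(x)^{-1}f(x)$, one computes for $\gamma\in\Gamma_0$ and a.e.\ $x\in X_0$ that
\[
f'(\gamma x)=\rho(\gamma x)^{-1}f(\gamma x)=\rho(\gamma x)^{-1}\alpha(\gamma,x)f(x)=\phi(\gamma)\rho(x)^{-1}f(x)=\phi(\gamma)f'(x),
\]
so $(\phi,f')\from\Gamma_0\actson X_0\longrightarrow\Lambda\actson Y$ is a homomorphism of permutation groups, while $f'(x)=\rho(x)^{-1}f(x)\mathrel{E}_\Lambda f(x)$ for all $x\in X_0$ by construction. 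The remaining points to check are bookkeeping: that $X_0$ is genuinely an ergodic component in the sense defined earlier (it is, since it is carved from the fibre of a finite factor, so $\Gamma_0\leq\Gamma$ has finite index and $\mu(X_0)>0$), and that $f'$ can be taken Borel and defined on all of $X_0$ after discarding a null set.
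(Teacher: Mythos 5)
The paper does not supply its own proof of this theorem: it is stated as a citation of \cite[Theorem~4.1]{ioana2}, and the derivation from Ioana's cocycle superrigidity is delegated to \cite[Corollary~3.3]{quasi}. Your proposal --- use freeness of $\Lambda\actson Y$ to extract a well-defined Borel cocycle $\alpha(\gamma,x)$ from the homomorphism $f$, invoke Ioana's theorem to untwist $\alpha$ to a group homomorphism $\phi\from\Gamma_0\into\Lambda$ with cobounding map $\rho$ on an ergodic component $X_0$, and set $f'(x)=\rho(x)^{-1}f(x)$ --- is precisely that standard derivation, and the final verification that $(\phi,f')$ is a homomorphism of permutation groups with $f'(x)\mathrel{E}_\Lambda f(x)$ is correct. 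So the proposal is sound and matches the route the paper defers to.
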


In other words, under the hypotheses of Ioana's Theorem, the Borel
homomorphism $f$ can be replaced by one which is more or less
equivalent to $f$, and which moreover comes from a homomorphism of
permutation groups.  Ioana's theorem is stated in a significantly
higher generality in \cite{ioana}; for a proof of
Theorem~\ref{thm_ioana} from his result, see
\cite[Corollary~3.3]{quasi}.  We presently combine
Theorem~\ref{thm_ioana} together with Lemma~\ref{lem_extends} to
obtain the following result.  Although the statement of
Theorem~\ref{thm_slnz} will not be needed later on, the argument will
be expanded upon during the proof of Theorem~\ref{thm_glnq}.

\begin{thm}
  \label{thm_slnz}
  Suppose that $m,n\geq3$ are natural numbers, and $p,q$ are distinct
  primes.  Then the orbit equivalence relation induced by the action
  $\PSL_m(\ZZ)\actson\PSL_m(\ZZ_p)$ is Borel incomparable with that
  induced by $\PSL_n(\ZZ)\actson\PSL_n(\ZZ_q)$.
\end{thm}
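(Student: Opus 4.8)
The plan is to prove the two non-reductions $E_{\PSL_m(\ZZ)}\not\leq_B E_{\PSL_n(\ZZ)}$ and $E_{\PSL_n(\ZZ)}\not\leq_B E_{\PSL_m(\ZZ)}$ separately, but by an entirely symmetric argument, so it suffices to describe one direction; say we assume toward a contradiction that $f$ is a Borel reduction from $E_{\PSL_m(\ZZ)}$ (acting on $\PSL_m(\ZZ_p)$) to $E_{\PSL_n(\ZZ)}$ (acting on $\PSL_n(\ZZ_q)$). First I would check that the hypotheses of Theorem~\ref{thm_ioana} are met: $\Gamma=\PSL_m(\ZZ)$ has property~(T) since $m\geq3$, and the action $\PSL_m(\ZZ)\actson\PSL_m(\ZZ_p)$ is free (the only element of $\PSL_m(\ZZ)$ lying in every congruence subgroup is the identity), ergodic (since $\PSL_m(\ZZ)$ is dense in $\PSL_m(\ZZ_p)$), and profinite (it is the inverse limit of the finite actions $\PSL_m(\ZZ)\actson\PSL_m(\ZZ/p^k\ZZ)$). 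The target action $\PSL_n(\ZZ)\actson\PSL_n(\ZZ_q)$ is likewise free. Therefore Ioana's theorem applies and yields an ergodic component $\Gamma_0\actson X_0$ for $\PSL_m(\ZZ)\actson\PSL_m(\ZZ_p)$ together with a homomorphism of permutation groups $(\phi,f')\from\Gamma_0\actson X_0\longrightarrow\PSL_n(\ZZ)\actson\PSL_n(\ZZ_q)$ agreeing with $f$ up to the target orbit equivalence relation.

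Next I would identify $\Gamma_0\actson X_0$ concretely. Every ergodic component of a profinite action of $\PSL_m(\ZZ)$ has, up to finite index adjustments, the form $\Gamma_0\actson\Gamma_0 L$, where $\Gamma_0$ is a finite-index subgroup of $\PSL_m(\ZZ)$ and $\Gamma_0 L$ is a coset-closure inside $\PSL_m(\ZZ_p)$; in fact one can take $X_0$ to be (a positive-measure piece of) $\PSL_m(\ZZ_p)$ itself and $\Gamma_0$ the closure-dense finite-index subgroup, so that $\Gamma_0\actson X_0$ is again of the form ``countable dense subgroup acting on a homogeneous space of a compact group,'' with $K_0=\overline{\Gamma_0}$ an open (hence finite-index) subgroup of $\PSL_m(\ZZ_p)$. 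On the target side, I would invoke the second Remark following Lemma~\ref{lem_extends}: replacing $\PSL_n(\ZZ)\actson\PSL_n(\ZZ_q)$ by the action of $\phi(\Gamma_0)$ on a single $\overline{\phi(\Gamma_0)}$-orbit $\overline{\phi(\Gamma_0)}z$, which is a homogeneous space for the compact group $K_1\defeq\overline{\phi(\Gamma_0)}\leq\PSL_n(\ZZ_q)$. The action $K_1\actson K_1/L_1$ has trivial kernel because $\PSL_n(\ZZ)\actson\PSL_n(\ZZ_q)$ is free and $L_1$ is the closure of a point stabilizer. Now if I knew $\phi$ were surjective onto $\phi(\Gamma_0)$ — which it is, by definition — I could apply Lemma~\ref{lem_extends} to conclude that $\phi$ extends to a continuous homomorphism $\Phi\from K_0\into K_1$, and then by Lemma~\ref{lem_affine} that $f'$ is (almost everywhere) an affine map.

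The crux is then a purely group-theoretic incompatibility. We would have a continuous homomorphism $\Phi$ from an open subgroup $K_0$ of $\PSL_m(\ZZ_p)$ into $\PSL_n(\ZZ_q)$ whose image contains the infinite dense subgroup $\phi(\Gamma_0)$. But $K_0$, being open in $\PSL_m(\ZZ_p)$, is a pro-$p$-by-finite group: it has an open pro-$p$ subgroup $K_0'$ (a congruence subgroup), so $\Phi(K_0')$ is a pro-$p$ subgroup of $\PSL_n(\ZZ_q)$. Since $p\neq q$, the group $\PSL_n(\ZZ_q)$ is (virtually) pro-$q$, and a pro-$p$ subgroup of a compact $q$-adic analytic group must be finite — indeed $\PSL_n(\ZZ_q)$ has an open pro-$q$ subgroup of finite index, and any pro-$p$ subgroup intersects it in a subgroup that is simultaneously pro-$p$ and pro-$q$, hence trivial, so the pro-$p$ subgroup is finite. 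Thus $\Phi(K_0')$ is finite, so $\Phi(K_0)$ is finite, contradicting the fact that $\Phi(K_0)\supseteq\phi(\Gamma_0)$ is infinite (note $\phi(\Gamma_0)$ is infinite because $\Gamma_0$ is a finite-index subgroup of the infinite group $\PSL_m(\ZZ)$ and $\phi$ has finite kernel, or at any rate because $f'$ being a reduction forces the image to meet infinitely many $E_{\PSL_n(\ZZ)}$-classes). This contradiction completes the proof.

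The main obstacle I anticipate is the bookkeeping needed to put $\Gamma_0\actson X_0$ and $\phi(\Gamma_0)\actson\overline{\phi(\Gamma_0)}z$ literally into the ``dense countable subgroup of a compact group acting on a homogeneous space'' form demanded by Lemmas~\ref{lem_affine} and~\ref{lem_extends} — in particular verifying that the ergodic component really is (conjugate to) a homogeneous space for an \emph{open} subgroup of $\PSL_m(\ZZ_p)$, and that the various kernels are trivial so that the hypotheses of Lemma~\ref{lem_extends} (and its first Remark, for injectivity if needed) genuinely hold. Once the actions are in standard form, the extension-to-$\Phi$ step is immediate from Lemma~\ref{lem_extends}, and the final pro-$p$ versus pro-$q$ clash is short. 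I expect the write-up to foreground this clash as the conceptual heart, with the structural preliminaries about profinite ergodic components handled briskly by appeal to the Remarks after Lemma~\ref{lem_extends}.
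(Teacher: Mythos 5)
Your proof follows the same overall strategy as the paper: apply Ioana's superrigidity (Theorem~\ref{thm_ioana}), use the remark after Lemma~\ref{lem_extends} to pass to a single $\overline{\phi(\Gamma_0)}$-orbit, apply Lemma~\ref{lem_extends} to extend $\phi$ to a continuous homomorphism $\Phi$ between the closures, and then derive a contradiction from the impossibility of a continuous pro-$p$-into-pro-$q$ map. The one genuine divergence is in the endgame. The paper first argues that $\Phi$ must be injective: it invokes Margulis's normal subgroup theorem to split $\ker\Phi$ into the central or finite-index cases, disposes of the central case by passing to $\PSL$, and disposes of the finite-index case by passing to a subcomponent where $\Phi$ is trivial and using that $f$ is countable-to-one; only then does it invoke Proposition~\ref{prop_slmzp}, which is stated as a non-embedding result. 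You instead observe directly that the continuous image of the open pro-$p$ subgroup $K_0'\leq K_0$ inside a virtually pro-$q$ group is simultaneously pro-$p$ and (modulo a finite quotient) pro-$q$, hence finite, so $\Phi(K_0)$ is finite with no injectivity needed. This streamlining bypasses Margulis's theorem entirely and is a cleaner route to the same contradiction. (The proof of Proposition~\ref{prop_slmzp} in fact establishes your stronger ``finite image'' statement once the hypothesis $\bigcap N_k=1$ is dropped; you are in effect noticing that the injectivity hypothesis there was never really needed.)

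One small caution: your parenthetical first justification that $\phi(\Gamma_0)$ must be infinite --- ``because \ldots $\phi$ has finite kernel'' --- is not supported by anything; there is no a priori reason $\ker\phi$ should be finite, and indeed it could be all of $\Gamma_0$. The correct justification is the one you give second: if $\phi(\Gamma_0)$ were finite you could pass to an ergodic subcomponent on which $\phi$ is trivial, so $f'$ would be invariant under an ergodic action and hence almost constant there, forcing $f$ to map a positive-measure set into a single countable $E_{\PSL_n(\ZZ)}$-class, contradicting that a Borel reduction is countable-to-one. That is exactly the move the paper makes in the finite-index-kernel case; it is essential, so it should be spelled out rather than left as an ``or at any rate.'' Also, the invocation of Lemma~\ref{lem_affine} to conclude $f'$ is affine is harmless but unnecessary; only Lemma~\ref{lem_extends} is used.
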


The fact that the orbit equivalence relation induced by
$\PSL_m(\ZZ)\actson\PSL_m(\ZZ_p)$ is not Borel reducible to that
induced by $\PSL_n(\ZZ)\actson\PSL_n(\ZZ_q)$ was essentially
established by Thomas for $n<m$ in \cite[Theorem~2.4]{torsionfree} and
for $n=m$ in \cite{super}.  The arguments in this section are almost
entirely built upon his, but also apply in the case that $m<n$.

\begin{proof}
  Suppose, towards a contradiction, that $f$ is a Borel reduction from
  $\SL_m(\ZZ)\actson\SL_m(\ZZ_p)$ to $\SL_n(\ZZ)\actson\SL_n(\ZZ_q)$.
  Then the hypotheses of Theorem~\ref{thm_ioana} are satisfied, so
  there exists an ergodic component $\Gamma\actson X$ for the action
  $\SL_m(\ZZ)\actson\SL_m(\ZZ_p)$ and a homomorphism
  $\phi\from\Gamma\into\SL_n(\ZZ)$ such that
  \[(\phi,f)\from\Gamma\actson
  X\longrightarrow\SL_n(\ZZ)\actson\SL_n(\ZZ_q)
  \]
  is a permutation group homomorphism.  We now wish to apply
  Lemma~\ref{lem_extends}, but at the moment the hypothesis that
  $\phi$ is surjective isn't satisfied.  However, recall that by the
  remarks following Lemma~\ref{lem_extends}, we can suppose that
  $f(X)$ is contained in some $\phi(\Gamma)$-orbit, say
  $\overline{\phi(\Gamma)}z$.  We may now apply
  Lemma~\ref{lem_extends} to the permutation group homomorphism
  \[(\phi,f)\from
  \Gamma\actson X\longrightarrow\phi(\Gamma)\actson\overline{\phi(\Gamma)}z
  \]
  to conclude that $\phi$ lifts to a homomorphism
  $\Phi\from\bar\Gamma\into\SL_n(\ZZ_q)$, where $\bar\Gamma$ denotes
  the closure of $\Gamma$ in $\SL_m(\ZZ_p)$.  It will now suffice to
  argue that $\Phi$ is injective, for this clearly contradicts
  Proposition \ref{prop_slmzp}, below.

  Indeed, if $\Phi$ is not injective, then by Margulis's theorem on
  normal subgroups \cite[Theorem~8.1.2]{zimmer}, either $\ker(\Phi)$
  lies in the center of $\SL_n(\ZZ_p)$ or it has finite index in
  $\bar\Gamma$.  In the case when $\ker(\Phi)$ is central, $\Phi$
  clearly induces an injective homomorphism
  $\bar\Gamma'\rightarrow\PSL_n(\ZZ_q)$, where $\bar\Gamma'$ denotes
  the image of $\bar\Gamma$ in $\PSL_m(\ZZ_p)$.  Once again, this
  clearly contradicts Proposition \ref{prop_slmzp}.  Hence, we may
  suppose that $\Phi(\bar\Gamma)$ is a finite subgroup of
  $\SL_m(\ZZ_q)$.  Now, replacing $\Gamma\actson X$ with an ergodic
  subcomponent if necessary, we can suppose without loss of generality
  that $\Phi=1$.  This implies that $f$ is $\Gamma$-invariant and
  since $\bar\Gamma\actson X$ is ergodic, $f$ is almost constant.
  Hence, in this case $f$ maps a conull set into a single
  $\SL_n(\ZZ)$-orbit, which is impossible since $f$ is
  countable-to-one.
\end{proof}

\begin{prop}
  \label{prop_slmzp}
  Let $m,n\geq2$ be arbitrary and $p,q$ be distinct primes.  Then for
  any subgroup $K\leq\SL_m(\ZZ_p)$ of finite index, $K$ does not embed
  into $\SL_n(\ZZ_q)$.  Similarly, any subgroup $K\leq\PSL_m(\ZZ_p)$
  of finite index does not embed into $\PSL_n(\ZZ_q)$.
\end{prop}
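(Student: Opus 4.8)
The plan is to exploit that $\SL_m(\ZZ_p)$ is virtually pro-$p$ while $\SL_n(\ZZ_q)$ is virtually pro-$q$, together with the elementary fact that a nontrivial profinite group cannot be simultaneously pro-$p$ and pro-$q$ when $p\neq q$.

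First I would recall the relevant structure. For $m\geq2$ and $\ell\geq1$, the congruence subgroup
\[U_\ell\defeq\ker\bigl(\SL_m(\ZZ_p)\to\SL_m(\ZZ/p^\ell\ZZ)\bigr)
\]
is an open normal subgroup of $\SL_m(\ZZ_p)$ of infinite order, and it is a pro-$p$ group: the successive quotients $U_\ell/U_{\ell+1}$ are elementary abelian $p$-groups, so every continuous finite quotient of $U_\ell$ is a $p$-group. Next, a finite-index subgroup $K\leq\SL_m(\ZZ_p)$ is automatically open (this is immediate when $K$ is closed, the only case we actually need, and in general it follows from the theorem that finite-index subgroups of a finitely generated profinite group are open), so $K$ contains some $U_\ell$; write $U\defeq U_\ell$, an infinite open pro-$p$ subgroup of $K$. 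Symmetrically, $V\defeq\ker\bigl(\SL_n(\ZZ_q)\to\SL_n(\ZZ/q\ZZ)\bigr)$ is an open pro-$q$ subgroup of $\SL_n(\ZZ_q)$.

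With these in place the argument is short. Suppose $\iota\from K\into\SL_n(\ZZ_q)$ is an embedding. Then $\iota(U)$ is a compact, hence closed, subgroup of $\SL_n(\ZZ_q)$, and $\iota|_U$ is a continuous bijection between compact Hausdorff groups, hence a topological isomorphism; thus $\iota(U)$ is an infinite pro-$p$ group. The intersection $\iota(U)\cap V$ is open in $\iota(U)$ because $V$ is open in $\SL_n(\ZZ_q)$, hence of finite index in $\iota(U)$, hence infinite. But $\iota(U)\cap V$ is a closed subgroup of the pro-$p$ group $\iota(U)$, so it is pro-$p$, and a closed subgroup of the pro-$q$ group $V$, so it is pro-$q$. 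Since it is infinite it has a nontrivial finite continuous quotient, which would have to be at once a $p$-group and a $q$-group --- a contradiction. The $\PSL$ statement is proved identically: the images of $U$ and $V$ in $\PSL_m(\ZZ_p)$ and $\PSL_n(\ZZ_q)$ are again infinite open pro-$p$, resp. pro-$q$, subgroups, being continuous images of pro-$p$, resp. pro-$q$, groups modulo the finite centers.

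I do not anticipate a real obstacle here: the entire content is the standard structure theory of compact $p$-adic analytic groups. The only point that requires care is that $\iota$ may be taken continuous and $K$ open; in the application to Theorem~\ref{thm_slnz} both are automatic, since there $K=\bar\Gamma$ is closed (hence open, being of finite index) and the homomorphism produced by Lemma~\ref{lem_extends} has closed graph and is therefore continuous. For the statement with arbitrary abstract embeddings one can instead observe that $K$ contains a closed subgroup isomorphic to $\ZZ_p$ (for instance $\overline{\langle g\rangle}$ for any $g\in U$ of infinite order) and use the elementary fact that a $q$-divisible abelian subgroup of a finitely generated pro-$q$ group is trivial; I would relegate this variant to a remark.
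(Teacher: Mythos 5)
Your argument rests on the topological incompatibility of pro-$p$ and pro-$q$ structure, which is a genuinely different route from the paper's. The paper's proof is a purely combinatorial counting argument: after reducing to the case that $K$ is a principal congruence subgroup, it considers the kernels $N_k$ of the composites $K\to\SL_n(\ZZ_q)\to\SL_n(\ZZ/q^k\ZZ)$ and observes that each index $[K:N_k]$ must divide a number of the form $cq^j$ (since $K/N_k$ embeds in $\SL_n(\ZZ/q^k\ZZ)$) and also a number of the form $bp^i$ (since $N_k$, having finite index, contains a principal congruence subgroup of $\SL_m(\ZZ_p)$). Hence the indices $[K:N_k]$ are bounded, contradicting $\bigcap N_k=1$. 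This handles abstract embeddings directly, with no appeal to continuity.

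Your main line of argument, by contrast, is only a proof for \emph{continuous} $\iota$: the step where you declare $\iota|_U$ a topological isomorphism onto its image presupposes the very continuity the proposition does not grant. You do flag this, and the fix you sketch --- locate an abstract copy of $\ZZ_p$ inside $K$, note it is $q$-divisible, and invoke the fact that a $q$-divisible abelian subgroup of a pro-$q$ group is trivial --- is correct and is in fact the argument that proves the proposition as stated; the continuous case is then just a corollary. So that ``remark'' should be promoted to the main proof, and it needs a bit of fleshing out: $\SL_m(\ZZ_p)$ contains a unipotent one-parameter subgroup $Z\cong\ZZ_p$; the finite-index subgroup $K\cap Z$ contains $n Z=p^{a}Z\cong\ZZ_p$ (where $n=[Z:K\cap Z]$ and $p^a\,\|\,n$, the prime-to-$p$ part of $n$ being a unit in $\ZZ_p$); its $\iota$-image $A$ is a $q$-divisible abelian abstract subgroup of $\SL_n(\ZZ_q)$; intersecting $A$ with an open pro-$q$ subgroup $V$ of finite index and repeating the $\ZZ_p$-argument produces a nontrivial $q$-divisible abelian subgroup of the pro-$q$ group $V$, which is impossible because its image in any finite $q$-quotient would be a nontrivial $q$-divisible finite $q$-group. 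Finally, your appeal to the theorem that finite-index subgroups of finitely generated profinite groups are open (Nikolov--Segal) is heavier machinery than the situation requires: for $\SL_m(\ZZ_p)$ it suffices to apply Serre's result inside the open finitely generated pro-$p$ congruence subgroup, which is precisely the elementary fact the paper cites.
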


For the proof, recall that $\SL_m(\ZZ_p)$ is the inverse limit of the
system of maps
\[\mathsf{pr}_k\from\SL_m(\ZZ_p)\into\SL_m(\ZZ/p^k\ZZ)\;,
\]
where $\mathsf{pr}_k$ always stands for the natural surjection.  We
shall also use the fact that any subgroup of $\SL_m(\ZZ_p)$ of finite
index contains some \emph{principle congruence subgroup}, that is, a
subgroup of the form $\ker(\mathsf{pr}_k)$.  (This is not as difficult
as some instances of the congruence subgroup problem.  Rather, it
follows from elementary properties of profinite and pro-$p$ groups.
See \cite{wilson} for the general properties of profinite groups, and
\cite[Exercise~1.9]{pro-p} for this particular fact.)

\begin{proof}[Proof of Proposition~\ref{prop_slmzp}]
  Passing to a finite index subgroup of $K$ if necessary, we may
  suppose without loss of generality that $K$ is a principle
  congruence subgroup of $\SL_m(\ZZ_p)$.  We shall use the well-known
  fact that for all $k$ there exists an $i$ such that the size of
  $\SL_m(\ZZ/p^k\ZZ)$ divides $bp^i$, where $b$ is some constant
  depending only on $m$ and $p$.  It follows that if $K'$ is any
  principle congruence subgroup of $K$ then $[K:K']$ also divides some
  $bp^i$.  The same reasoning applies to $\SL_n(\ZZ_q)$, and so there
  exists some $c\in\NN$ with the analogous properties.

  Now, suppose towards a contradiction that
  $\Phi\from K\into\SL_n(\ZZ_q)$ is an injective homomorphism.  For
  each $k$, let $N_k\leq K$ denote the kernel of the composition:
  \[K\stackrel{\Phi}{\longrightarrow}\SL_n(\ZZ_q)
  \stackrel{\mathsf{pr_k}}{\longrightarrow}\SL_n(\ZZ/q^k\ZZ)\;.
  \]
  Then for each $k$, we have that $K/N_k$ embeds into
  $\SL_n(\ZZ/q^k\ZZ)$, and so there exists a $j$ such that $[K:N_k]$
  divides $cq^j$.  On the other hand, $N_k$ also contains a principle
  congruence subgroup, and so there exists an $i$ such that $[K:N_k]$
  divides $bp^i$.  Now each $[K:N_k]$ divides both some $cq^j$ and
  some $bp^i$, and it follows that the sequence of indices $[K:N_k]$
  must be bounded.

  Now, to reach a contradiction, we shall argue that $\bigcap N_k=1$
  and hence $[K:N_k]$ tends to infinity.  Indeed, if $\gamma\in\bigcap
  N_k$ then $\gamma\in\ker(\mathsf{pr}_k\circ\Phi)$ for all $k$.
  Since $\Phi$ is injective, $\gamma\in\ker(\mathsf{pr}_k)$ for all
  $k$.  Since $\SL_m(\ZZ_q)$ is precisely the inverse limit
  corresponding to the maps $\mathsf{pr}_k$, it follows that
  $\gamma=1$, which completes the proof.
\end{proof}

\begin{rem}
  The same argument can be used to show that $\SL_n(\ZZ_p)$ does not
  even embed into any quotient of a closed subgroup of $\SL_m(\ZZ_q)$.
  To see this, one may check that such a group can again be
  expressed as an inverse limit of groups whose cardinalities are
  essentially powers of $q$ (that is, dividing $cq^i$ for some fixed
  $c$).  This is precisely the property that was required in the
  proof.
\end{rem}

Next, we shall adapt the argument of Proposition~\ref{prop_slmzp} to
establish our key result.  In order to express the result in the
greatest generality, we will use the following strengthening of the
notion ergodicity.

\begin{defn}
  Let $\Gamma\actson X$ be a probability measure-preserving action,
  and let $F$ be an arbitrary equivalence relation on the standard
  Borel space $Y$.  Then $\Gamma\actson X$ is said to be
  \emph{$F$-ergodic} iff whenever $f\from X\into Y$ is a Borel
  homomorphism from $E_\Gamma$ to $F$, there exists a conull $A\subset
  X$ such that $f(X)$ is contained in a single $F$-class.
\end{defn}

Recall that if $\Gamma\actson X$ is ergodic, then $E_\Gamma$ is
nonsmooth.  We have similarly that if $\Gamma\actson X$ is
$F$-ergodic, then $E_\Gamma\not\leq_BF$.  Moreover, in this case, if
$E$ is any countable Borel equivalence relation such that
$E_\Gamma\subset E$, then also $E\not\leq_BF$.

\begin{thm}
  \label{thm_glnq}
  Suppose that $m,n\geq3$ and $k\leq n$, and that $p,q$ are distinct
  primes.  Then $\SL_m(\ZZ)\actson\PP(\QQ_p^m)$ is $F$-ergodic, where
  $F$ is the orbit equivalence relation induced by the action
  $\GL_n(\QQ)\actson\Gr_k(\QQ_q^n)$.
\end{thm}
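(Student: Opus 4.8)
The plan is to imitate the proof of Theorem~\ref{thm_slnz}: use Ioana's Theorem~\ref{thm_ioana} to replace a given Borel homomorphism by one induced from a permutation group homomorphism, use Lemma~\ref{lem_extends} to push the acting homomorphism up to the compact ($q$-adic) level, and use Proposition~\ref{prop_slmzp} together with Margulis's normal subgroup theorem to show it must be trivial. The two features not already present in Theorem~\ref{thm_slnz} are that the target action $\GL_n(\QQ)\actson\Gr_k(\QQ_q^n)$ is not free, and that the desired conclusion is $F$-ergodicity rather than an outright contradiction. It is convenient to prove, by induction on the dimension of the target flag variety, a version of the statement that allows $n\geq1$ and partial flag varieties of inner forms of $\GL_n$ over local fields; the cases in which the Grassmannian is a point serve as the base of the induction. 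So fix a Borel homomorphism $f$ from $E_{\SL_m(\ZZ)}$ on $\PP(\QQ_p^m)$ to $F$. Since scalar matrices act trivially, these orbit equivalence relations coincide with those of $\PSL_m(\ZZ)\actson\PP(\QQ_p^m)$ and $\PGL_n(\QQ)\actson\Gr_k(\QQ_q^n)$; as a nonscalar matrix fixes only a Haar-null set of subspaces in either space, after deleting null invariant sets I may assume that $\PSL_m(\ZZ)\actson\PP(\QQ_p^m)$ is free, and it is also ergodic (by strong approximation) and profinite, with $\PSL_m(\ZZ)$ of property~(T) since $m\geq3$. The non-free locus $Z$ of $\PGL_n(\QQ)\actson\Gr_k(\QQ_q^n)$ is $\PGL_n(\QQ)$-invariant, so $f^{-1}(Z)$ is $E_{\SL_m(\ZZ)}$-invariant, and by ergodicity it is null or conull; I treat the two possibilities separately.

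Suppose first that $f^{-1}(Z)$ is null. After restricting $f$ and both spaces to the conull invariant sets on which the target action is free, Theorem~\ref{thm_ioana} supplies an ergodic component $\Gamma_0\actson X_0$ for $\PSL_m(\ZZ)\actson\PP(\QQ_p^m)$ together with a permutation group homomorphism $(\phi,f')\from\Gamma_0\actson X_0\longrightarrow\PGL_n(\QQ)\actson Y$, where $Y$ is the free part, satisfying $f'(x)\mathrel{F}f(x)$ for all $x\in X_0$. As in the proof of Theorem~\ref{thm_slnz}, $X_0$ is, modulo a null set, a homogeneous space $\bar\Gamma_0/L_0$ for the open finite-index subgroup $\bar\Gamma_0\leq\PSL_m(\ZZ_p)$ obtained as the closure of $\Gamma_0$. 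Now $f'$ pushes the invariant probability measure on $X_0$ forward to a $\phi(\Gamma_0)$-invariant probability measure on $\Gr_k(\QQ_q^n)$; by the standard boundedness principle for subgroups of $\PGL$ over a local field that preserve a probability measure on a flag variety, either $\overline{\phi(\Gamma_0)}$ is precompact --- in which case, after a harmless conjugation in $\PGL_n(\QQ_q)$ and restriction of $f'$ to a single orbit (via the Remark after Lemma~\ref{lem_extends}), the target becomes a homogeneous space of a compact subgroup of $\PGL_n(\QQ_q)$ --- or the pushforward measure concentrates on a proper $\phi(\Gamma_0)$-invariant subvariety of $\Gr_k(\QQ_q^n)$, which lets me replace the target by a strictly smaller flag variety and finish by the induction hypothesis. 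In the precompact case, Lemma~\ref{lem_extends} and its Remarks produce a homomorphism $\Phi\from\bar\Gamma_0\longrightarrow K_1$ extending $\phi$, where $K_1$ is a quotient of a compact subgroup of $\PGL_n(\QQ_q)$, hence an inverse limit of finite groups whose orders divide $cq^i$ for a fixed $c$. By the argument of Proposition~\ref{prop_slmzp} (and the Remark following it), no finite-index subgroup of $\PSL_m(\ZZ_p)$ --- nor any quotient of one by a central subgroup --- embeds in such a group; combined with Margulis's normal subgroup theorem applied to the lattice $\Gamma_0$, this forces $\ker\Phi$ to have finite index in $\bar\Gamma_0$. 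Passing to a further ergodic subcomponent $\Gamma_1\actson X_1$, I may arrange $\phi|_{\Gamma_1}=1$, so that $f'$ is $\Gamma_1$-invariant and hence constant almost everywhere on $X_1$; thus $f$ maps the positive-measure set $X_1$ into a single $F$-class $C$. Finally, $\set{x:f(x)\in C}$ is $E_{\SL_m(\ZZ)}$-invariant, so by ergodicity it is conull, as required.

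Suppose instead that $f^{-1}(Z)$ is conull. Since $\PGL_n(\QQ)$ has only countably many conjugacy classes of nontrivial finite subgroups, and for each such class the set of subspaces whose stabilizer contains a representative of it is $\PGL_n(\QQ)$-invariant, ergodicity together with countable additivity yields a nontrivial finite $H\leq\PGL_n(\QQ)$ --- which I take maximal --- such that $f$ maps a conull set into the $\PGL_n(\QQ)$-saturation of $W=\set{V:\mathrm{Stab}(V)=H}$. Restriction to $W$ exhibits $F$, restricted to that saturation, as Borel bireducible with the orbit equivalence relation of $N_{\PGL_n(\QQ)}(H)\actson W$; and since $H$ acts non-scalarly, decomposing $\QQ_q^n$ into $H$-isotypic components identifies $W$ with a finite disjoint union of products of module-Grassmannians of strictly smaller dimension, on which $N_{\PGL_n(\QQ)}(H)$ acts --- up to a finite-index, finite-kernel discrepancy --- through the corresponding product of projective general linear groups of inner forms. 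As $F$-ergodicity of $\SL_m(\ZZ)\actson\PP(\QQ_p^m)$ is inherited along Borel reducibility and is detected summand-by-summand on disjoint unions and factor-by-factor on products (each step being an application of ergodicity), the conclusion in this case follows from the induction hypothesis.

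The main obstacle is the honest execution of the boundedness dichotomies: verifying the Furstenberg-type step so that, after conjugation and passage to a subaction, the closure of $\phi(\Gamma_0)$ really is a compact group (so that Lemma~\ref{lem_extends} applies and its image carries the $q$-power structure needed for Proposition~\ref{prop_slmzp}), and carrying out the analogous reduction in the non-free case, which requires phrasing the inductive hypothesis generally enough to cover Grassmannians of inner forms over local fields. Everything else is a fairly direct transcription of the arguments already given for Theorem~\ref{thm_slnz} and Proposition~\ref{prop_slmzp}.
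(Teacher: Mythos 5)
Your non-free case has a genuine gap. You decompose the non-free locus $Z$ of $\PGL_n(\QQ)\actson\Gr_k(\QQ_q^n)$ according to conjugacy classes of nontrivial \emph{finite} subgroups $H\leq\PGL_n(\QQ)$, asserting that $Z$ is covered by the sets $\set{V:\mathrm{Stab}(V)\text{ contains a conjugate of }H}$. But this cover is not exhaustive: the stabilizer of a non-free point need not contain any nontrivial torsion. Concretely, take $n=3$, $k=1$, a prime $q$ for which $x^3-x-1$ has a root in $\QQ_q$, and let $\gamma\in\GL_3(\QQ)$ be the companion matrix of $x^3-x-1$; if $V\in\PP(\QQ_q^3)$ is the corresponding eigenline, then $\mathrm{Stab}_{\PGL_3(\QQ)}(V)$ is the centralizer of $\gamma$ modulo scalars, namely $K^\times/\QQ^\times$ with $K=\QQ[x]/(x^3-x-1)$. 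Since the Galois closure of $K$ has $\pm1$ as its only roots of unity and $[K:\QQ]=3$ is prime, no $\mu\in K^\times\setminus\QQ^\times$ satisfies $\mu^k\in\QQ^\times$, so $K^\times/\QQ^\times$ is torsion-free. Such $V$ therefore lie in $Z$ but in none of your $W$'s, and an ergodicity/countable-additivity argument over finite conjugacy classes cannot locate them; the rest of that paragraph (restriction to $W$, isotypic decomposition, induction on inner forms) collapses.

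The paper avoids this entirely by a different reduction. It regards $\Gr_k(\QQ_q^n)$ via the Pl\"ucker embedding as lines in $\bigwedge^k\QQ_q^n$; a non-free $f(x)$ then lies in a proper eigenspace of some $\bigwedge^k\gamma$, hence in a proper $\bar\QQ$-subspace. Taking the \emph{minimal} $\bar\QQ$-subspace $E_x$ containing $f(x)$ and pushing it to a fixed $V$ by ergodicity, one regards $V\iso\QQ_q^d$ and sets $H$ to be the group of projective transformations of $V$ induced by the $\PGL_n(\QQ)$-stabilizer of $V$. Minimality of $V$ makes $H\actson\PP(V)$ free, so Ioana's theorem applies to a target $\PGL_d(\bar\QQ\cap\QQ_q)$-action directly --- no induction on Grassmannians of inner forms is needed, and in particular no classification of stabilizer subgroups. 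After Ioana, finite generation of $\phi(\Gamma)$ lands it in $\PSL_d(F)$ for a number field $F\subset\QQ_q$, and Margulis's boundedness theorem (Theorem~VII.5.16 of \cite{margulis}, the precise ``boundedness dichotomy'' you gestured at) puts a finite-index subgroup inside $\PSL_d(\mathcal O_F)\subset\PSL_d(\ZZ_q)$. From there Lemma~\ref{lem_extends} and Proposition~\ref{prop_slmzp} finish exactly as in your free case. So your instinct that the boundedness step is the crux is right, and your precompact branch is essentially the paper's argument; but your handling of the non-free locus needs to be replaced wholesale by the exterior-power / minimal-$\bar\QQ$-subspace device before the induction you propose becomes unnecessary.
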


Once again, this has already been established by Thomas for $n<m$ in
\cite[Theorem~2.4]{torsionfree} and for $n=m$ in
\cite[Theorem~4.7]{plocal}.

\begin{proof}
  Suppose that $f$ is a Borel homomorphism from the orbit equivalence
  relation induced by $\SL_m(\ZZ)\actson\PP(\QQ_p^m)$ to that induced
  by $\SL_m(\ZZ)\actson(\PP\QQ_p^m)$.  We cannot immediately apply
  Theorem~\ref{thm_ioana}, since neither action is free.  By
  \cite[Lemma~6.2]{super}, the action $\PSL_m(\ZZ)\actson\PP(\QQ_p^m)$
  is \emph{almost free}, meaning that there exists a conull subset of
  $\PP(\QQ_p^m)$ on which $\PSL_m(\ZZ)$ acts freely.  Hence, we may
  restrict $f$ to this set to satisfy the freeness condition on the
  left-hand side.  On the other hand, for the right-hand side we must
  consider the \emph{free part}:
  \[Y\defeq\set{y\in\Gr_k(\QQ_q^n)\mid1\neq\gamma\in\PGL_n(\QQ)\implies\gamma y\neq y}\;.
  \]
  If there exists a conull subset $X\subset\PP(\QQ_p^m)$ such that
  $f(X)\subset Y$, then we may apply Theorem~\ref{thm_ioana} and we
  are done after repeating the argument from Theorem~\ref{thm_slnz}.
  Hence, since the action $\SL_m(\ZZ)\actson\PP(\QQ_p^m)$ is ergodic,
  we may suppose instead that there exists an invariant conull subset
  $X\subset\PP(\QQ_p^m)$ such that $f(X)\subset\Gr_k(\QQ_q^n)\setminus
  Y$.

  In this case, we will follow the argument found in
  \cite[Lemma~5.1]{plocal} to replace the target action
  $\PGL_n(\QQ)\actson \Gr_k(\QQ_q^n)$ with a closely related free
  action.  For this argument, it is helpful to think of elements of
  $\Gr_k(\QQ_q^n)$ as one-dimensional subspaces of the exterior power
  $\bigwedge^k\QQ_q^n$.  Here, if $y\in\Gr_k(\QQ_q^n)$ is a
  $k$-dimensional subspace of $\QQ_q^n$ with basis $v_1,\ldots,v_k$,
  then we identify $V$ with the linear subspace of
  $\bigwedge^k\QQ_q^n$ spanned by the simple tensor
  $v_1\wedge\cdots\wedge v_k$.  The relations which hold in the
  exterior algebra then ensure that this identification is
  well-defined.
  
  Now, for each $x\in X$, since $f(x)\notin Y$, we must have
  that $f(x)$ is contained in a proper eigenspace of some element of
  $\GL_n(\QQ)$.  Notice that such eigenspaces are
  \emph{$\bar\QQ$\nobreakdash-subspaces} of $\bigwedge^k\QQ_q^n$,
  where $E$ is said to be a $\bar\QQ$\nobreakdash-subspace iff there
  exists a basis for $E$ which consists only of vectors over the
  algebraic closure $\bar\QQ$ of the rationals.  Hence, for $x\in X$
  we may let $E_x$ denote a minimal $\bar\QQ$\nobreakdash-subspace of
  $\bigwedge^k\QQ_q^n$ such that $f(x)\leq E_x$.  Since there are only
  countably many possibilities for $E_x$, by the ergodicity of
  $\SL_n(\ZZ)\actson X$ we may suppose that there exists a fixed
  $\bar\QQ$\nobreakdash-subspace $V$ such that $E_x=V$ for all $x\in
  X$.  Let $H$ denote the group of projective linear transformations
  induced on $V$ by elements of the setwise stabilizer
  $\PGL_n(\QQ)_{\{V\}}$ of $V$ in $\PGL_n(\QQ)$.  Then it is easily
  checked using the minimality of $V$ that $H$ acts freely on
  $\PP(V)$.

  Now, let $d$ denote the dimension of $V$ and regard $V$ as the
  vector space $\QQ_q^d$, so that $H$ corresponds to a subgroup of
  $\PGL_d(\bar\QQ\cap\QQ_q)$.  Then we may regard $f$ as a Borel
  homomorphism from the orbit equivalence relation induced by
  $\PSL_m(\ZZ)\actson\PP(\QQ_p^m)$ to that induced by
  $H\actson\PP(\QQ_p^d)$.  Since the action of $H$ on $\PP(\QQ_q^d)$
  is free, we may now apply Theorem~\ref{thm_ioana}.  Hence, we may
  suppose that there exists an ergodic component $\Gamma\actson X$ for
  $\PSL_m(\ZZ)\actson\PP(\QQ_p^m)$ and a homomorphism
  $\phi\from\Gamma\into H$ such that
  \[(\phi,f)\from\Gamma\actson X\longrightarrow H\actson\PP(\QQ_q^d)
  \]
  is a homomorphism of permutation groups.

  Now, since $\Gamma$ has property (T) (see
  \cite[Theorem~1.5]{lubotzky}), it is in particular finitely
  generated (see \cite[Proposition~1.24]{lubotzky}).  Hence,
  $\phi(\Gamma)$ is finitely generated, and it follows that $H$ is
  contained in some $\PGL_d(F)$, where $F\leq\bar\QQ\cap\QQ_q$ is a
  finite field extension of $\QQ$.  Moreover, the commutator subgroup
  $\Gamma'\defeq[\Gamma,\Gamma]$ is a finite index subgroup of
  $\Gamma$ (see \cite[Corollary 1.29]{lubotzky}).  Since
  $\PGL_d(F)/\PSL_d(F)\iso F^\times$ is abelian, we have that
  \[\phi(\Gamma')\leq [\PGL_d(F),\PGL_d(F)]\leq\PSL_d(F)\;.
  \]
  (Actually, the latter inequality is an equality.)  Hence, replacing
  $\Gamma\actson X$ with an ergodic component for the action of
  $\Gamma'$ if necessary, we may suppose without loss of generality
  that $\phi(\Gamma)\subset\PSL_d(F)$.
  
  \begin{claim*}
    We can suppose without loss of generality that
    $\phi(\Gamma)\subset\PSL_d(\mathcal O_F)$, where $\mathcal O_F$
    denotes the ring of integers of $F$.
  \end{claim*}

  \begin{claimproof}
    Recall that an element $x\in F$ lies in the ring of integers
    $\mathcal O_F$ if and only if $v(x)\geq0$ for every nonarchimedian
    valuation $v$ on $F$.  More generally, if $S$ is a set of
    valuations on $F$, then we say that $x\in F$ is an
    \emph{$S$\nobreakdash-integer} iff $v(x)\geq0$ for all
    nonarchimedian valuations $v\notin S$.  We denote the ring of
    $S$-integers of $F$ by $F(S)$, so that in particular the notation
    implies that $\mathcal O_F=F(\emptyset)$.

    Now, note that $F$ is the union of the rings $F(S)$ as $S$ varies
    over all finite sets of valuations on $F$.  Therefore, using the
    fact that $\phi(\Gamma)$ is finitely generated, there exists a
    finite set $S$ of valuations on $F$ such that
    \begin{equation}
      \label{eqn_sld1}
      \phi(\Gamma)\subset\SL_d(F(S))\;.
    \end{equation}
    Next, for any valuation $v$ on $F$, let $F_v$ denote the completion
    of $F$ with respect to $v$, and $\mathcal O_v$ the ring of
    integers of $F_v$.  It is clear from the definitions that we have
    \begin{equation}
      \label{eqn_sld2}
      \PSL_d(\mathcal O_F)
      =\PSL_d(F(S))\cap\bigcap_{v\in S}\PSL_d(\mathcal O_v)\;.
    \end{equation}
    By \cite[Theorem VII.5.16]{margulis}, for each nonarchimedian
    valuation $v$ on $F$, $\phi(\Gamma)$ is relatively compact in
    $\SL_d(F_v)$.  (To see that the hypotheses of \cite[Theorem
    VII.5.16]{margulis} are satisfied, note that by \cite[Theorem
    VIII.3.10]{margulis}, the Zariski closure in $\PSL_d(F_v)$ of
    $\phi(\Gamma)$ is semisimple.)  Since $\PSL_d(\mathcal O_v)$ is an
    open subgroup of $\PSL_d(F_v)$, we have that
    $\phi(\Gamma)\cap\PSL_d(\mathcal O_v)$ is of finite index in
    $\phi(\Gamma)$.  Since $S$ is finite, it follows that
    \[\phi(\Gamma)\cap\bigcap_{v\in S}\PSL_d(\mathcal O_v)
    \]
    is also of finite index in $\phi(\Gamma)$.  This, together with
    equations \eqref{eqn_sld1} and \eqref{eqn_sld2}, implies that
    $\phi(\Gamma)\cap\PSL_d(\mathcal O_F)$ has finite index in
    $\phi(\Gamma)$.  Thus, replacing $\Gamma$ with a subgroup of
    finite index establishes the claim.
  \end{claimproof}

  Now, recall that $F\subset\QQ_q$, and it follows that $\mathcal
  O_F\subset\ZZ_q$.  (Indeed, $F$ carries a $q$-adic valuation and so
  each $x\in\mathcal O_F$ has $v_q(x)\geq0$.)  Combining this with the
  Claim, we have that $\phi(\Gamma)\subset\PSL_d(\ZZ_q)$.  For the
  remainder of the proof, let $K_0$ denote the closure of $\Gamma$ in
  $\SL_m(\ZZ_p)$ and let $K_1$ denote the closure of $\phi(\Gamma)$ in
  $\PSL_d(\ZZ_q)$.  Roughly speaking, we now wish to maneuver into a
  situation where we can apply Lemma~\ref{lem_extends} to the
  permutation group homomorphism
  \[(\phi,f)\from
  \Gamma\actson X\longrightarrow\phi(\Gamma)\actson\PP(\QQ_q^d)\;.
  \]
  to obtain an embedding of $K_0$ into $K_1$, which would be a
  contradiction.  First, by the remarks following
  Lemma~\ref{lem_extends}, we can suppose that $f(X)$ is contained in
  a single $K_1$-orbit, say $K_1z$.  We would like to apply
  Lemma~\ref{lem_extends} to the permutation group homomorphism
  \[(\phi,f)\from
  \Gamma\actson X\longrightarrow\phi(\Gamma)\actson K_1z\;,
  \]
  but it is not necessarily the case that $K_1$ acts faithfully on
  $K_1z$.  However, if there is a kernel $N\trianglelefteq K_1$ for
  this action, then $K_1z$ is naturally a homogeneous $K_1/N$-space.
  Composing $(\phi,f)$ with the obvious factor map, we may now apply
  Lemma~\ref{lem_extends} to obtain a homomorphism $\Phi\from K_0\into
  K_1/N$.  Arguing as in the proof of Theorem~\ref{thm_slnz} we can
  suppose that $\Phi$ is injective, but this contradicts the remark
  following Proposition~\ref{prop_slmzp}.
\end{proof}

\section{Torsion-free abelian groups}

In this section, we shall use Theorem~\ref{thm_glnq} to prove
Theorems~A and B.  In order to do so, we must first show that the
isomorphism equivalence relations on spaces of local torsion-free
abelian groups are in fact very closely related to orbit equivalence
relations on Grassmann spaces.  For this, we shall rely on some
methods of Hjorth, Thomas and myself which ultimately make use of the
Kurosh-Malcev invariants for torsion-free abelian groups of finite
rank.

Recall that $\oqiso_{m,p}$ denotes the quasi-isomorphism relation on
the space of $p$-local torsion-free abelian groups of rank $m$.  The
following result is a straightforward application of the Kurosh-Malcev
$p$-adic localization technique.

\begin{lem}[\protect{\cite[Theorem 4.3]{plocal}}]
  \label{lem_qiso}
  The quasi-isomorphism relation $\oqiso_{m,p}$ is Borel bireducible
  with the orbit equivalence relation induced by the action of
  $\GL_n(\QQ)$ on the full Grassmann space $\Gr(\QQ_p^m)$ of all
  vector subspaces of $\QQ_p^m$.
\end{lem}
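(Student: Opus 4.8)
The plan is to use the Kurosh--Malcev $p$-adic localisation functor to translate $\oqiso_{m,p}$ into a statement about $\ZZ_p$-modules, and then pass to its Pontryagin dual. Realise a $p$-local torsion-free abelian group $A$ of rank $m$ as a full-rank subgroup of $\QQ^m$, and let $M_A$ denote the $\ZZ_p$-span of $A$ inside $\QQ_p^m$; this is a full $\ZZ_p$-submodule of $\QQ_p^m$, and $A\mapsto M_A$ is Borel. Two standard facts about this functor, both of which I would verify first, are that $A\iso B$ exactly when $M_B=\phi(M_A)$ for some $\phi\in\GL_m(\QQ)$, and, because $A$ and $B$ are $q$-divisible for every prime $q\ne p$, that $A$ and $B$ are quasi-isomorphic exactly when $\phi(M_A)$ is commensurable with $M_B$ (as subgroups of $\QQ_p^m$) for some $\phi\in\GL_m(\QQ)$. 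Thus $\oqiso_{m,p}$ becomes the relation of $\GL_m(\QQ)$-orbit-equivalence on commensurability classes of full $\ZZ_p$-submodules of $\QQ_p^m$, and it remains to match this with the orbit equivalence relation $E$ induced by $\GL_m(\QQ)\actson\Gr(\QQ_p^m)$.

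The device for doing so is Pontryagin duality for the compact group $\ZZ_p^m$, whose dual is the discrete group $(\QQ_p/\ZZ_p)^m$. Given a full $\ZZ_p$-submodule $M\le\QQ_p^m$, choose (Borel-uniformly) a lattice $\Lambda\subseteq M$; then $M/\Lambda$ is a subgroup of $\QQ_p^m/\Lambda\cong(\QQ_p/\ZZ_p)^m$, and its annihilator is a $\ZZ_p$-submodule $L$ of the dual lattice $\cong\ZZ_p^m$. Since every $\ZZ_p$-submodule of a lattice is finitely generated, $L$ is a lattice in the subspace $W_M:=\QQ_p L$. One then checks the following dictionary, every link of which is a commensurability-invariant bijection: commensurability classes of full $\ZZ_p$-submodules of $\QQ_p^m$ correspond to commensurability classes of subgroups of $(\QQ_p/\ZZ_p)^m$; these correspond, via annihilators, to commensurability classes of $\ZZ_p$-submodules of $\ZZ_p^m$; and these, since any two lattices spanning a common subspace are commensurable while lattices spanning distinct subspaces are not, correspond precisely to the subspaces in $\Gr(\QQ_p^m)$. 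Because the duality is natural in $\ZZ_p^m$, the linear action of $\phi\in\GL_m(\QQ)$ on submodules translates to the action of the inverse-transpose of $\phi$ on subspaces; since inverse-transpose is an automorphism of $\GL_m(\QQ)$, this twisted action has the same orbits as the standard one. Hence $A\mapsto W_{M_A}$ is a Borel reduction from $\oqiso_{m,p}$ to $E$.

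For the reverse reduction I would invert the construction: given $W\in\Gr(\QQ_p^m)$, put $L_W:=W\cap\ZZ_p^m$, which is a saturated sublattice (so $\ZZ_p^m/L_W$ is free and the dual behaves well), let $H_W$ be the subgroup of $(\QQ_p/\ZZ_p)^m$ annihilating $L_W$, regard $H_W$ as a subgroup of $(\QQ/\ZZ)^m$ via the canonical inclusion of $(\QQ_p/\ZZ_p)^m$ as the $p$-primary part of $(\QQ/\ZZ)^m$, and let $A_W\le\QQ^m$ be the preimage of $H_W$ under the projection $\QQ^m\to(\QQ/\ZZ)^m$. Then $A_W$ is a $p$-local group of rank $m$, $W\mapsto A_W$ is Borel, and running the dictionary forward on $A_W$ returns $W$ itself; so $W\mapsto A_W$ is a Borel reduction from $E$ to $\oqiso_{m,p}$. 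Combining the two reductions yields the asserted Borel bireducibility.

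I expect the main obstacle to be the bookkeeping forced by the non-integral elements of $\GL_m(\QQ)$: such a $\phi$ need not preserve $\ZZ_p^m$, so none of the intermediate objects — the quotient $M/\Lambda$, its annihilator, the resulting lattice and subspace — is canonical before one passes to commensurability classes, and one must check that this passage (together with passage to $\GL_m(\QQ)$-orbits on the Grassmann side) really does absorb all the ambiguity. This is exactly the point at which Pontryagin duality is indispensable: the commensurability classes of submodules $M$ with infinitely generated $M/\Lambda$ do not themselves visibly correspond to subspaces, whereas their annihilators in $\ZZ_p^m$ are automatically finitely generated and hence transparently classified by the subspace they span. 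Everything else — the Borel measurability of the lattice and annihilator choices, and the verification of the two Kurosh--Malcev facts above — is routine.
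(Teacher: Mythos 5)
The paper does not prove this lemma itself; it cites \cite[Theorem~4.3]{plocal} and remarks only that it is a ``straightforward application of the Kurosh--Malcev $p$-adic localization technique.'' The standard proof goes by sending a full $\ZZ_p$-submodule $M\le\QQ_p^m$ to its \emph{divisible part} $D_M=\bigcap_{k}p^kM$: commensurable modules have the same divisible part, two modules with the same divisible part differ only in a choice of complementary $\ZZ_p$-lattice (and any two such lattices are commensurable), and $D_{\phi(M)}=\phi(D_M)$ for $\phi\in\GL_m(\QQ)$. Your Pontryagin-duality route is in effect computing $D_M^\perp$ rather than $D_M$, and then undoing the transposition with the inverse-transpose twist you note. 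That is correct in spirit but not, as you claim, ``indispensable'': passing straight to $D_M$ eliminates the choice of $\Lambda$, the non-canonical isomorphism $\QQ_p^m/\Lambda\cong(\QQ_p/\ZZ_p)^m$, and the dictionary of commensurability classes you must then track.

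There is, however, a genuine gap in the reverse reduction. You set $A_W$ to be the preimage in $\QQ^m$ of the $p$-group $H_W\subseteq(\QQ_p/\ZZ_p)^m\subseteq(\QQ/\ZZ)^m$. Then $A_W/\ZZ^m=H_W$ is a $p$-group, so $A_W\subseteq\ZZ[1/p]^m$. Such a group is \emph{not} $p$-local: a $p$-local group $A$ with $\ZZ^m\le A\le\QQ^m$ must be a $\ZZ_{(p)}$-module, hence must contain $\ZZ_{(p)}^m$, hence must have $A/\ZZ^m$ containing the entire complementary primary part $\bigoplus_{q\neq p}(\QQ_q/\ZZ_q)^m$ of $(\QQ/\ZZ)^m$. (Concretely, $(1,0,\dots,0)\in A_W$ but $(1/q,0,\dots,0)\notin A_W$ for $q\neq p$, so $A_W$ is not $q$-divisible.) The fix is to take $A_W$ to be the preimage of $H_W\oplus\bigoplus_{q\neq p}(\QQ_q/\ZZ_q)^m$, equivalently $A_W:=\ZZ_{(p)}^m+\pi^{-1}(H_W)$ where $\pi\from\QQ^m\into(\QQ/\ZZ)^m$ is the quotient. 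With that correction, and keeping in mind that the two Kurosh--Malcev facts you cite should be applied to groups normalized to contain $\ZZ_{(p)}^m$, the argument goes through.
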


Of course, the full Grassmann space decomposes naturally into the
invariant components $\Gr_k(\QQ_p^m)$, for $k=0,\ldots,n$.

\begin{cor}[Theorem~A]
  \label{mainthmp1}
  If $m,n\geq3$ and $p,q$ are distinct primes, then $\oqiso_{m,p}$ is
  Borel incomparable with $\oqiso_{n,q}$.
\end{cor}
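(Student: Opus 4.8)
The plan is to deduce everything from Theorem~\ref{thm_glnq}. By Lemma~\ref{lem_qiso}, $\oqiso_{m,p}$ is Borel bireducible with the orbit equivalence relation $E$ induced by $\GL_m(\QQ)\actson\Gr(\QQ_p^m)$, and likewise $\oqiso_{n,q}\sim_B F$, where $F$ is induced by $\GL_n(\QQ)\actson\Gr(\QQ_q^n)$. Since $\perp_B$ is symmetric and the hypotheses on the pairs $(m,p)$ and $(n,q)$ are interchangeable, it suffices to prove $E\not\leq_B F$; the reverse non-reducibility then follows by swapping the two pairs. Moreover, restriction to an invariant Borel set never increases Borel complexity, so it is enough to show that the $\GL_m(\QQ)$-orbit equivalence relation $E_1$ on the clopen component $\PP(\QQ_p^m)=\Gr_1(\QQ_p^m)$ of $\Gr(\QQ_p^m)$ satisfies $E_1\not\leq_B F$.

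So suppose toward a contradiction that $g$ is a Borel reduction from $E_1$ to $F$. The space $\Gr(\QQ_q^n)$ is the disjoint union $\bigsqcup_{k=0}^{n}\Gr_k(\QQ_q^n)$ of $\GL_n(\QQ)$-invariant clopen pieces, and since $\GL_n(\QQ)$ preserves dimension the function $x\mapsto\dim g(x)$ is $E_1$-invariant, hence $E_{\SL_m(\ZZ)}$-invariant because $\SL_m(\ZZ)\leq\GL_m(\QQ)$. As $\SL_m(\ZZ)\actson\PP(\QQ_p^m)$ is ergodic with respect to the Haar measure, this function is almost everywhere equal to a single constant $k$, and of course $k\leq n$. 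After deleting the (invariant, null) set where $\dim g(x)\neq k$ and redefining $g$ there to be constant, we may regard $g$ as a Borel homomorphism from $E_{\SL_m(\ZZ)}$ to the orbit equivalence relation $F_k$ induced by $\GL_n(\QQ)\actson\Gr_k(\QQ_q^n)$.

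Now Theorem~\ref{thm_glnq} applies: since $m\geq3$, $k\leq n$, and $p\neq q$, the action $\SL_m(\ZZ)\actson\PP(\QQ_p^m)$ is $F_k$-ergodic, so there is a conull $A\subset\PP(\QQ_p^m)$ with $g(A)$ contained in a single $F_k$-class, and hence in a single $F$-class. Since $g$ is a reduction of $E_1$ to $F$, the set $A$ is then contained in a single $E_1$-class, i.e.\ in a single $\GL_m(\QQ)$-orbit; but $\GL_m(\QQ)$ is countable and the Haar measure on $\PP(\QQ_p^m)$ is non-atomic, so no $\GL_m(\QQ)$-orbit can be conull. This contradiction gives $E_1\not\leq_B F$, hence $E\not\leq_B F$, hence $\oqiso_{m,p}\not\leq_B\oqiso_{n,q}$; swapping $(m,p)$ and $(n,q)$ yields the other direction, so $\oqiso_{m,p}\perp_B\oqiso_{n,q}$. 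The one point requiring a little care---since the substance is already in Theorem~\ref{thm_glnq}---is precisely this: a reduction into $\Gr(\QQ_q^n)$ might a priori spread across the components $\Gr_k(\QQ_q^n)$, and it is the ergodicity of $\SL_m(\ZZ)\actson\PP(\QQ_p^m)$ that confines the image to one component and lets Theorem~\ref{thm_glnq} be invoked; the remainder is routine.
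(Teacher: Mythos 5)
Your proof is correct and follows essentially the same route as the paper: use Lemma~\ref{lem_qiso} to pass to the Grassmann actions, restrict to $\PP(\QQ_p^m)$, use ergodicity of $\SL_m(\ZZ)\actson\PP(\QQ_p^m)$ to pin down a single component $\Gr_k(\QQ_q^n)$, and then apply Theorem~\ref{thm_glnq} to get the image of a conull set lying in one countable orbit, contradicting non-atomicity. (You are more careful than the paper at one spot: the paper cites ergodicity of $\SL_n(\ZZ)\actson\PP(\QQ_q^n)$ where it clearly means the source action $\SL_m(\ZZ)\actson\PP(\QQ_p^m)$, which is what you correctly invoke; and you spell out the symmetry reduction to a single direction, which the paper leaves implicit.)
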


\begin{proof}
  Suppose that there exists a Borel reduction from $\oqiso_{m,p}$ to
  $\oqiso_{n,q}$.  Then by Lemma~\ref{lem_qiso} there exists a Borel
  reduction
  \[f\from\GL_m(\QQ)\actson\Gr(\QQ_p^m)
  \longrightarrow\GL_n(\QQ)\actson\Gr(\QQ_q^n)\;.
  \]
  Now, consider the restriction of $f$ to $\PP(\QQ_p^m)$.  Since each
  $Gr_k(\QQ_q^n)$ is $\GL_n(\QQ)$-invariant, by the ergodicity of
  $\SL_n(\ZZ)\actson\PP(\QQ_q^n)$, we can adjust $f$ on a null set to
  suppose that $f$ takes values in $\Gr_k(\QQ_q^n)$ for some fixed
  $k$.  Therefore, $f$ is a Borel homomorphism
  \[f\from\SL_m(\ZZ)\actson\PP(\QQ_p^m)\longrightarrow
  \GL_n(\QQ)\actson\Gr_k(\QQ_q^n)\;.
  \]
  By Theorem \ref{thm_glnq}, the image $f\!\left(\PP(\QQ_p^m)\right)$
  is a countable set, which is impossible since $f$ is a
  countable-to-one function.
\end{proof}

The proof of Theorem~B is nearly identical, modulo the following
rather technical piece of machinery.

\begin{lem}[\protect{\cite[Lemma~4.1]{quasi}}]
  \label{lem_iso}
  The isomorphism relation $\oiso_{m,p}$ is Borel bireducible with an
  equivalence relation $\oiso_{m,p}'$ which, thought of as a set of
  pairs, lies properly between the orbit equivalence relations induced
  by the actions $\SL_m(\ZZ)\actson\Gr(\QQ_p^m)$ and
  $\GL_m(\QQ)\actson\Gr(\QQ_p^m)$.
\end{lem}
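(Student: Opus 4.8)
The plan is to deduce this from the Kurosh--Malcev $p$-adic localization technique, refining Thomas's argument for the quasi-isomorphism version (Lemma~\ref{lem_qiso}) so as to retain the $\ZZ_p$-module information that distinguishes isomorphism from quasi-isomorphism. Recall that a $p$-local torsion-free abelian group $A$ of rank $m$ becomes, after a choice of $\QQ$-basis of $A\otimes\QQ$, a full-rank $\ZZ_{(p)}$-submodule of $\QQ^m$; its $p$-adic hull $\hat A\defeq A\otimes_\ZZ\ZZ_p$, realized as the closure of $A$ inside $\QQ_p^m$, is a full-rank $\ZZ_p$-submodule of $\QQ_p^m$ with $\hat A\cap\QQ^m=A$ (here one uses that $A$ is $p$-local). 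Since at every prime $q\neq p$ the $q$-adic hull is all of $\QQ_q^m$ and so imposes no condition, the localization theorem asserts that $A\iso B$ if and only if $g\hat A=\hat B$ for some $g\in\GL_m(\QQ)$. First I would verify that the collection $\mathcal M$ of all hulls arising this way is a standard Borel space, that $A\mapsto\hat A$ is Borel, and hence that $\oiso_{m,p}$ is Borel bireducible with the orbit equivalence relation induced by $\GL_m(\QQ)\actson\mathcal M$; this is precisely the isomorphism analogue of Lemma~\ref{lem_qiso}, in which one instead passes to commensurability classes of hulls and thereby collapses $\mathcal M$ onto $\Gr(\QQ_p^m)$.

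The remaining task is to transport this $\GL_m(\QQ)$-action from $\mathcal M$ onto the full Grassmann space. Each hull $L\in\mathcal M$ decomposes canonically: its maximal divisible $\QQ_p$-subspace $W_L\defeq\bigcap_n p^nL$ is an honest point of $\Gr(\QQ_p^m)$, the assignment $L\mapsto W_L$ is Borel and $\GL_m(\QQ_p)$-equivariant, and the quotient $L/W_L$ is a reduced full-rank $\ZZ_p$-submodule of $\QQ_p^m/W_L$ — hence, as $\ZZ_p$ is a complete discrete valuation ring, a genuine $\ZZ_p$-lattice of rank $m-\dim W_L$ (restricting to a Borel-conull part if necessary). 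Using a Borel selection of normal forms for such lattices, chosen equivariantly under the compact group $\SL_m(\ZZ_p)$, I would weave the pair (divisible subspace, reduced lattice part) into a single subspace $\eta(L)\in\Gr(\QQ_p^m)$; there is no obstruction of dimension or cardinality here, since any two uncountable standard Borel spaces are Borel isomorphic. Pulling back the orbit relation of $\GL_m(\QQ)\actson\mathcal M$ along $\eta$ defines $\oiso_{m,p}'$, which is then Borel bireducible with $\oiso_{m,p}$ by construction. For the sandwich: $\eta$ can be arranged to be $\SL_m(\ZZ)$-equivariant, because $\SL_m(\ZZ)\subset\GL_m(\ZZ_p)$ preserves the standard lattice underlying the normal forms, and since an integral change of basis carries each subgroup of $\QQ^m$ to an isomorphic one this yields $E_{\SL_m(\ZZ)}\subseteq\oiso_{m,p}'$; on the other hand, isomorphic groups are quasi-isomorphic, so by Lemma~\ref{lem_qiso}, applied to the divisible-subspace coordinate of $\eta$, the $\GL_m(\QQ)$-orbit of $\eta(L)$ depends only on the quasi-isomorphism type of $A$, whence $\oiso_{m,p}'\subseteq E_{\GL_m(\QQ)}$. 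Finally, properness: the rational matrix $\mathrm{diag}(p,1,\dots,1)$ realizes an isomorphism between hulls whose codes are generically not $\SL_m(\ZZ)$-equivalent, since it changes an elementary divisor of the lattice part and has determinant $\neq\pm1$, so $E_{\SL_m(\ZZ)}\subsetneq\oiso_{m,p}'$; and there exist quasi-isomorphic but non-isomorphic $p$-local groups of rank $m$ — indeed $\oqiso_{m,p}\not\leq_B\oiso_{m,p}$ by \cite[Theorem~B]{quasi} — so $\oiso_{m,p}'\subsetneq E_{\GL_m(\QQ)}$.

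The crux is the middle step: producing a single Borel recoding $\eta\from\mathcal M\to\Gr(\QQ_p^m)$ that is simultaneously a Borel isomorphism, $\SL_m(\ZZ)$-equivariant (securing the lower bound), and compatible with the projection onto quasi-isomorphism classes (securing the upper bound) — reconciling the fact that a hull genuinely carries more information than a subspace, its reduced lattice part having unbounded elementary divisors, with the requirement that the transported equivalence relation still be squeezed between two concrete orbit relations on $\Gr(\QQ_p^m)$. I would carry this out following the construction in \cite[Lemma~4.1]{quasi}. The remaining ingredients — Borelness of $\mathcal M$ and of the maps $A\mapsto\hat A$ and $L\mapsto W_L$, standardness of the spaces involved, and the structure theory of finite-rank reduced $\ZZ_p$-modules — are routine.
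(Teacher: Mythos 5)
Your first step (Kurosh--Malcev: pass from a $p$-local group $A$ to its hull $\hat A$, so that $\oiso_{m,p}$ becomes the $\GL_m(\QQ)$-orbit relation on a standard Borel space $\mathcal M$ of full-rank $\ZZ_p$-submodules of $\QQ_p^m$) is fine and is exactly where the cited argument starts. The gap is the step you yourself flag as the crux, and it is not merely unproven: as literally specified it cannot work. You ask for a Borel isomorphism $\eta\from\mathcal M\into\Gr(\QQ_p^m)$ that is $\SL_m(\ZZ)$-equivariant; but $\mathcal M$ contains infinitely many $\SL_m(\ZZ)$-fixed points (the hulls $p^k\ZZ_p^m$, $k\in\ZZ$), whereas by Zariski density of $\SL_m(\ZZ)$ in $\SL_m$ the only $\SL_m(\ZZ)$-invariant subspaces of $\QQ_p^m$ are $0$ and $\QQ_p^m$; an equivariant injection carries fixed points to fixed points, so no such $\eta$ exists, and retreating to a conull set does not help because the asserted containments between $E_{\SL_m(\ZZ)}$ and $E_{\GL_m(\QQ)}$ concern \emph{all} pairs in $\Gr(\QQ_p^m)$. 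Moreover your argument for the upper containment is a non sequitur: from $gL=L'$ with $g\in\GL_m(\QQ)$ one only gets that the divisible parts $W_L,W_{L'}$ lie in one $\GL_m(\QQ)$-orbit; the relation $\oiso_{m,p}'$ you define relates $\eta(L)$ to $\eta(L')$, which by design also encode the lattice parts, and Lemma~\ref{lem_qiso} says nothing about these woven codes. Finally, at the decisive moment you write that you would carry out the weaving ``following the construction in \cite[Lemma~4.1]{quasi}'' --- that is the very statement to be proved, so the crux is deferred circularly rather than supplied.

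For comparison, the cited lemma does not encode hulls injectively into $\Gr(\QQ_p^m)$ at all. One defines $\oiso_{m,p}'$ directly on $\Gr(\QQ_p^m)$ via the canonical section $V\mapsto V+\ZZ_p^m$: say $V\mathrel{\oiso_{m,p}'}W$ iff there is $g\in\GL_m(\QQ)$ with $g(V+\ZZ_p^m)=W+\ZZ_p^m$ (equivalently, the groups $(V+\ZZ_p^m)\cap\QQ^m$ and $(W+\ZZ_p^m)\cap\QQ^m$ are isomorphic). The sandwich is then immediate: every $\gamma\in\SL_m(\ZZ)$ preserves $\ZZ_p^m$, giving $E_{\SL_m(\ZZ)}\subset\oiso_{m,p}'$, and taking divisible parts shows $g(V+\ZZ_p^m)=W+\ZZ_p^m$ forces $gV=W$, giving $\oiso_{m,p}'\subset E_{\GL_m(\QQ)}$. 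The genuine content, which is missing from your proposal, is the reduction $\oiso_{m,p}\leq_B\oiso_{m,p}'$: one must show that every hull is $\GL_m(\QQ)$-equivalent to one in the standard form $D+\ZZ_p^m$ (this uses that $\GL_m(\QQ)$ is dense in $\GL_m(\QQ_p)$, so a $p$-adic change of basis splitting the hull can be approximated by a rational one), and then choose such a standardization in a Borel fashion, e.g.\ by Lusin--Novikov uniformization, noting that the admissible subspaces for a given group all lie in a single countable $\GL_m(\QQ)$-orbit. Your properness checks would then go through on explicit examples, but as written they sit on top of a construction that has not been, and in the proposed form cannot be, carried out.
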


\begin{cor}[Theorem~B]
  \label{mainthmp2}
  If $m,n\geq3$ and $p,q$ are distinct primes, then $\oiso_{m,p}$ is
  Borel incomparable with $\oiso_{n,q}$.
\end{cor}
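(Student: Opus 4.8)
The plan is to follow the proof of Corollary~\ref{mainthmp1} almost verbatim, substituting Lemma~\ref{lem_iso} for Lemma~\ref{lem_qiso}.  Since the hypotheses on $m,n,p,q$ are symmetric under interchanging the pairs $(m,p)$ and $(n,q)$, it suffices to prove that $\oiso_{m,p}\not\leq_B\oiso_{n,q}$.  Suppose, towards a contradiction, that there is a Borel reduction from $\oiso_{m,p}$ to $\oiso_{n,q}$.  By Lemma~\ref{lem_iso} we have $\oiso_{m,p}\sim_B\oiso_{m,p}'$ and $\oiso_{n,q}\sim_B\oiso_{n,q}'$, and composing with the witnessing reductions we obtain a Borel reduction $f$ from $\oiso_{m,p}'$ to $\oiso_{n,q}'$.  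Observe that $\oiso_{m,p}'$, being contained in the orbit equivalence relation of the countable group $\GL_m(\QQ)$, is itself a countable Borel equivalence relation; hence $f$ is countable-to-one, since $f(x)=f(x')$ forces $x\mathrel{\oiso_{m,p}'}x'$ and every $\oiso_{m,p}'$-class is countable.

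Next I would exploit the sandwiching provided by Lemma~\ref{lem_iso}.  As a set of pairs, $\oiso_{m,p}'$ \emph{contains} the orbit equivalence relation induced by $\SL_m(\ZZ)\actson\Gr(\QQ_p^m)$, whereas $\oiso_{n,q}'$ is \emph{contained in} the orbit equivalence relation induced by $\GL_n(\QQ)\actson\Gr(\QQ_q^n)$.  Consequently $f$ is in particular a Borel homomorphism
\[f\from\SL_m(\ZZ)\actson\Gr(\QQ_p^m)\longrightarrow\GL_n(\QQ)\actson\Gr(\QQ_q^n)\;.\]
Now restrict $f$ to the $\SL_m(\ZZ)$-invariant subset $\PP(\QQ_p^m)=\Gr_1(\QQ_p^m)$.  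Since $\Gr(\QQ_q^n)$ is the disjoint union of the $\GL_n(\QQ)$-invariant pieces $\Gr_k(\QQ_q^n)$, the preimages $f^{-1}(\Gr_k(\QQ_q^n))$ form an $\SL_m(\ZZ)$-invariant Borel partition of $\PP(\QQ_p^m)$; by the ergodicity of $\SL_m(\ZZ)\actson\PP(\QQ_p^m)$ one of them is conull, so after deleting a null set I may assume that $f$ takes all of its values in a single $\Gr_k(\QQ_q^n)$.  Thus $f$ is a Borel homomorphism
\[f\from\SL_m(\ZZ)\actson\PP(\QQ_p^m)\longrightarrow\GL_n(\QQ)\actson\Gr_k(\QQ_q^n)\;.\]

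Finally I apply Theorem~\ref{thm_glnq}: since $m,n\geq3$, $k\leq n$ and $p\neq q$, the action $\SL_m(\ZZ)\actson\PP(\QQ_p^m)$ is $F$-ergodic, where $F$ is the orbit equivalence relation of $\GL_n(\QQ)\actson\Gr_k(\QQ_q^n)$.  Hence, after discarding a further null set, $f(\PP(\QQ_p^m))$ is contained in a single $F$-class, which is countable.  But $f$ is countable-to-one, so the conull subset of $\PP(\QQ_p^m)$ on which this holds would be a countable union of countable fibers, hence countable --- which is impossible, as $\PP(\QQ_p^m)$ carries a non-atomic Haar measure.  This contradiction establishes $\oiso_{m,p}\not\leq_B\oiso_{n,q}$, and the symmetric argument gives $\oiso_{n,q}\not\leq_B\oiso_{m,p}$, completing the proof of Theorem~B.

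The only point requiring genuine care beyond the proof of Corollary~\ref{mainthmp1} is the bookkeeping forced by Lemma~\ref{lem_iso}: one must use the \emph{lower} bound $\SL_m(\ZZ)\actson\Gr(\QQ_p^m)$ on the source, so that $f$ is in particular a Borel homomorphism out of the $\SL_m(\ZZ)$-orbit relation (the setting of Theorem~\ref{thm_glnq} and of the ergodicity of $\PP(\QQ_p^m)$), and the \emph{upper} bound $\GL_n(\QQ)\actson\Gr(\QQ_q^n)$ on the target, so that the image relation is exactly the $\GL_n(\QQ)$-orbit relation appearing in that theorem.  Everything else --- the decomposition into the invariant pieces $\Gr_k(\QQ_q^n)$, the ergodicity reduction to a fixed $k$, and the countable-to-one contradiction --- runs exactly as in the quasi-isomorphism case.
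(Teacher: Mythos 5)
Your proof is correct and follows essentially the same route as the paper: use Lemma~\ref{lem_iso} to replace $\oiso_{m,p}$ and $\oiso_{n,q}$ by $\oiso_{m,p}'$ and $\oiso_{n,q}'$, exploit the sandwiching (lower bound on the source, upper bound on the target) to turn the reduction into a Borel homomorphism from $\SL_m(\ZZ)\actson\Gr(\QQ_p^m)$ to $\GL_n(\QQ)\actson\Gr(\QQ_q^n)$, then repeat the argument from Corollary~\ref{mainthmp1}. The paper compresses the last step into ``arguing as in the proof of Corollary~\ref{mainthmp1}''; you have simply unpacked that reference, including correctly supplying $\SL_m(\ZZ)\actson\PP(\QQ_p^m)$ as the ergodic action doing the work.
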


\begin{proof}
  If there exists a Borel reduction from $\oiso_{m,p}$ to
  $\oiso_{n,q}$, then there exists a Borel reduction $f$ from
  $\oiso_{m,p}'$ to $\oiso_{n,q}'$.  It follows from the containments
  described in Lemma~\ref{lem_iso} that $f$ is also a Borel
  homomorphism:
  \[f\from\SL_m(\ZZ)\actson\Gr(\QQ_p^m)
  \longrightarrow\GL_n(\QQ)\actson\Gr(\QQ_q^n)\;.
  \]
  Arguing as in the proof of Corollary~\ref{mainthmp1}, we again
  arrive at a contradiction.
\end{proof}

\bibliographystyle{alpha}
\begin{singlespace}
  \bibliography{cber}
\end{singlespace}

\end{document}